\numberwithin{equation}{section}
\newtheorem{theorem}[equation]{Theorem}
\newtheorem{proposition}[equation]{Proposition}
\newtheorem{lemma}[equation]{Lemma}
\newtheorem{corollary}[equation]{Corollary}
\theoremstyle{definition}
\newtheorem{definition}[equation]{Definition} 
\newtheorem{remark}[equation]{Remark}
\newtheorem{example}[equation]{Example}
\newcommand{\R}{\mathbb R}
\newcommand{\Q}{\mathbb Q}
\newcommand{\Z}{\mathbb Z}
\newcommand{\bb}{\mathcal B}
\newcommand{\f}{\mathcal F}
\newcommand{\g}{\mathfrak{g}}
\newcommand{\HH}{\mathbb H}
\newcommand{\h}{\mathfrak{h}}
\newcommand{\card}{\operatorname{Card}}
\newcommand{\diam}{\operatorname{diam}}
\newcommand{\Span}{\operatorname{span}}
\newcommand{\measurerestr}{%
  \,\raisebox{-.127ex}{\reflectbox{\rotatebox[origin=br]{-90}{$\lnot$}}}\,%
}
\begin{document}

\title{Differentiation of measures in metric spaces}

\author{S\'everine Rigot}

\address[Rigot]{Universit\'e C\^ote d'Azur, CNRS, LJAD, France}

\email{rigot@unice.fr}

\thanks{Lecture Notes of a course given at the CIME-CIRM Course on New Trends on Analysis and Geometry in Metric Spaces, June 26-30, 2017, Levico Terme, Italy. The author is partially supported by ANR grant ANR-15-CE40-0018.}

\begin{abstract} 
The theory of differentiation of measures originates from works of Besicovitch in the 1940's. His pioneering works, as well as subsequent developments of the theory, rely as fundamental tools on suitable covering properties. The first aim of these notes is to recall nowadays classical results about differentiation of measures in the metric setting together with the covering properties on which they are based. We will then focus on one of these covering properties, called in the present notes the weak Besicovitch covering property, which plays a central role in the characterization of (complete separable) metric spaces where the differentiation theorem holds for every (locally finite Borel regular) measure. We review in the last part of these notes recent results about the validity or non validity of this covering property. 
\end{abstract}

\maketitle

\section{Introduction}

The theory of differentiation of measures originates from works of Besicovitch in the 1940's. His pioneering works, as well as subsequent developments of the theory, rely as fundamental tools on suitable covering properties. Our aim in these notes is twofold. In the first sections, we explain classical results about differentiation of measures in metric spaces together with the covering properties on which they are based. Such covering results are more generally useful tools that can be used to deduce global properties of a measure, and, for some of them, of the ambient metric space, from local ones. Our presentation, which does not aim at exhaustivity on the subject, in mainly based on~\cite{federer},~\cite{heinonen},~\cite{mattila},~\cite{P}. In the last section, we focus on one of these covering properties, namely, the weak Besicovitch covering property in the terminology of the present notes, which plays a central role in the characterization of (complete separable) metric spaces where the differentiation theorem holds for every (locally finite Borel regular) measure. We review in the last part of these notes recent results about the validity or non validity of this covering property that are mainly taken from~\cite{LeDonneRigot1},~\cite{LeDonneRigot2},~\cite{LeDonneRigot3}.

\medskip

Throughout these notes, a measure means a nonnegative, monotonic, countably subadditive set function defined on all subsets of a topological space $X$, vanishing for the empty set. All measures are furthermore assumed to be Borel regular, which means that open sets are measurable and every set is contained in a Borel set with the same measure. We say that a measure is locally finite if every point has a neighborhood with finite measure. We refer for instance to~\cite[Chapter~1]{mattila} for detailed definitions.

\medskip

We denote by $(X,d)$ a space $X$ metrized by a distance $d$. Balls in $(X,d)$ are assumed to be closed. Namely, a ball denotes a set of the form $B(x,r) := \{y\in X : d(x,y) \leq r\}$ for some $x\in X$ and some $0<r<\infty$.  

\medskip
We say that the differentiation theorem holds for a measure $\lambda$ over $(X,d)$ if
\begin{equation} \label{e:lebesgue-diff}
\lim_{r\downarrow 0} \frac{1}{\lambda(B(x,r))} \int_{B(x,r)} f(y) \, d\lambda y = f(x) \;\; \text{ for } \lambda \text{-a.e. } x\in X
\end{equation}
for every $f\in L^1_{loc} (\lambda)$, where the latter means that every point has a neighborhood where $f$ is $\lambda$-integrable.

\medskip

In Section~\ref{sect:Vitali-type-meas}, we consider a classical Vitali type covering property for a measure $\lambda$ that implies the validity of the differentiation theorem for $\lambda$ under mild additional assumptions on $(X,d)$ and/or $\lambda$. In Section~\ref{sect:doubling-meas}, we prove that doubling measures are of Vitali type, and hence form a large class of measures for which the results of Section~\ref{sect:Vitali-type-meas} apply. Section~\ref{sect:Eulidean-space} is devoted to Radon measures over the Euclidean space. We recall in this section that the Euclidean distance satisfies a strong Besicovitch covering property which implies that every Radon measure over the Euclidean space is of Vitali type. Motivated by the example of the Euclidean space, we slightly change our point of view in Section~\ref{sect:finite-dim-metrics}. We investigate covering properties for metrics over an ambient separable space that imply the validity of the differentiation theorem for every locally finite Borel regular measure. We end this section with a characterization of complete separable metric spaces for which the differentiation theorem holds for every locally finite Borel regular measure. Section~\ref{sect:wbcp} is devoted to a closer study of the weak Besicovitch covering property introduced in the previous sections. We give in this last section recent results about the validity or non validity of this covering property.

\section{Vitali type measures} \label{sect:Vitali-type-meas}

We present in this section a classical Vitali type covering property for a measure $\lambda$ over a metric space $(X,d)$ that implies the validity of the differentiation theorem for $\lambda$ under mild additional assumptions on $(X,d)$ and/or $\lambda$. Our presentation follows closely~\cite[2.9]{federer} although our setting is slightly different. We shall work here with locally finite Borel regular measures over separable metric spaces. Results in this section hold also in slighlty different contexts with minor modifications of the arguments presented here, see Remark~\ref{rk:federer-context}.

\begin{definition} [Vitali type measures] \label{def:vitali-type-meas}
A Borel regular measure $\lambda$ over a metric space $(X,d)$ is said to be of \textit{Vitali type with respect to $d$} if for every $A\subset X$ and every family $\bb$ of balls in $(X,d)$ such that each point of $A$ is the center of arbitrarily small balls of $\bb$, that is, $$\inf \{ r>0 : B(x,r) \in \bb \} = 0 \;\;\text{for } x\in A,$$ there exists a countable disjointed subfamily $\f \subset \bb$ such that the balls in $\f$ cover $\lambda$-almost all of $A$, namely, $$\lambda \left(A \setminus \bigcup_{B\in\f} B\right) = 0.$$
\end{definition}

\medskip

We denote by $M$ the class of all locally finite Borel regular measures over $(X,d)$. Recall that if $(X,d)$ is separable, every measure $\lambda \in M$ has the following useful properties. First, there exists a sequence $U_1,U_2,\cdots$ of open sets such that $X=\cup_{j\geq 1} U_j$ and $\lambda(U_j) <\infty$ for every $j\geq 1$. In particular $\lambda$ is $\sigma$-finite. Second, $\lambda$ is outer regular, which means that for every $\lambda$-measurable set $A$ and every $\epsilon>0$, there is an open set $U$ such that $A\subset U$ and $\lambda (U\setminus A) <\epsilon$.

\medskip

For a given measure $\lambda \in M$, we associate to $\mu\in M$ the following set function,
$$\mu_\lambda (A) := \inf\{ \mu(C) : C \text{ is a Borel set and } \lambda (A\setminus C) = 0\}$$ for $A\subset X$. Evidently $\mu_\lambda \leq \mu$ as set functions. Elementary facts and the relationship between $\mu$ and $\mu_\lambda$ are given in Theorem~\ref{thm:decomposition-meas} below. We recall that, given measures $\lambda$ and $\mu$ over a space $X$, we say that $\mu$ is absolutely continuous with respect to $\lambda$, and we write $\mu \ll \lambda$, if $\lambda(A)$ implies $\mu(A) = 0$ for $A\subset X$. We say that $\mu$ and $\lambda$ are mutually singular, and we write $\mu \perp \lambda$,  if there exists a set $A\subset X$ such that $\mu(A) = 0 = \lambda (X\setminus A)$.

\begin{theorem} \label{thm:decomposition-meas}
Assume that $(X,d)$ is separable. Let $\lambda \in M$. Then, for each measure $\mu\in M$, there exists a borel set $A$ such that $$\mu_\lambda = \mu \measurerestr A \;\;\text{and}\;\; \lambda (X\setminus A) =0.$$
Therefore $\mu_\lambda \in M$, $\mu_\lambda \ll \lambda$, and $$\mu = \mu_\lambda + \mu_s,$$ where 
$\mu_s := \mu \measurerestr (X\setminus A)$ and $\lambda$ are mutually singular. Furthermore $\mu \ll \lambda$ iff $\mu = \mu_\lambda$.
\end{theorem}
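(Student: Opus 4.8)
The plan is to reduce the entire statement to the construction of a single Borel set $A$ enjoying two structural properties: (i) $\lambda(X \setminus A) = 0$, and (ii) $\mu(A \cap D) = 0$ for every Borel set $D$ with $\lambda(D) = 0$. First I would check that (i) and (ii) alone force $\mu_\lambda = \mu \measurerestr A$. For the inequality $\mu_\lambda(E) \le \mu(E \cap A)$, I would use Borel regularity of $\mu$ to pick a Borel set $C \supseteq E \cap A$ with $\mu(C) = \mu(E \cap A)$; since $E \setminus C \subseteq E \setminus A \subseteq X \setminus A$, property (i) gives $\lambda(E \setminus C) = 0$, so $C$ is admissible in the definition of $\mu_\lambda(E)$. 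For the reverse inequality, given any admissible Borel $C$ (so $\lambda(E \setminus C) = 0$), I would use Borel regularity of $\lambda$ to enclose $E \setminus C$ in a Borel $\lambda$-null set $D$; then $(E \cap A) \setminus C \subseteq A \cap D$, so (ii) gives $\mu((E \cap A) \setminus C) = 0$ and subadditivity yields $\mu(E \cap A) \le \mu(E \cap A \cap C) \le \mu(C)$. Taking the infimum over $C$ closes the loop.

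To construct $A$, I would first treat the case where both $\mu(X)$ and $\lambda(X)$ are finite. Set $m := \mu_\lambda(X) = \inf\{\mu(C) : C \text{ Borel}, \lambda(X \setminus C) = 0\}$, pick a minimizing sequence $C_n$, and let $A := \bigcap_n C_n$. Then $A$ is Borel, $\lambda(X \setminus A) \le \sum_n \lambda(X \setminus C_n) = 0$ gives (i), and $\mu(A) = m$ (it lies below every $C_n$ and is itself admissible), so $A$ is a minimizer. Property (ii) then follows from minimality: if $\lambda(D) = 0$ with $D$ Borel, then $A \setminus D$ is still admissible, so $\mu(A \setminus D) \ge m = \mu(A)$, whence $\mu(A \cap D) = \mu(A) - \mu(A \setminus D) = 0$. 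This subtraction is exactly where finiteness of $\mu(A)$ is used, and I expect it to be the one genuinely delicate point.

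To pass to the general case I would exploit $\sigma$-finiteness, which holds here because $(X,d)$ is separable: I will cover $X$ by countably many disjoint Borel sets $V_j$ on each of which both $\mu$ and $\lambda$ are finite (intersect the open covers of finite $\mu$- and $\lambda$-measure and disjointify). Applying the finite case to $\mu \measurerestr V_j$ and $\lambda \measurerestr V_j$ produces Borel sets $A_j \subseteq V_j$ with the analogues of (i) and (ii), and I would set $A := \bigcup_j A_j$. Disjointness of the $V_j$ makes $X \setminus A = \bigsqcup_j (V_j \setminus A_j)$, giving (i), and property (ii) is inherited piece by piece, since $\lambda(D) = 0$ forces $\lambda(D \cap V_j) = 0$ and the pieces are $\mu$-disjoint.

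Finally I would read off the consequences. The set function $\mu_\lambda = \mu \measurerestr A$ is a measure dominated by $\mu$, hence locally finite, and it is Borel regular as the restriction of a Borel regular measure to a Borel set, so $\mu_\lambda \in M$; absolute continuity $\mu_\lambda \ll \lambda$ is immediate from (ii). Measurability of $A$ gives $\mu = \mu \measurerestr A + \mu \measurerestr (X \setminus A) = \mu_\lambda + \mu_s$, and the set $A$ witnesses $\mu_s \perp \lambda$ because $\mu_s(A) = 0$ and $\lambda(X \setminus A) = 0$. The last equivalence is then formal: $\mu = \mu_\lambda$ gives $\mu \ll \lambda$ from $\mu_\lambda \ll \lambda$, while $\mu \ll \lambda$ together with $\lambda(X \setminus A) = 0$ forces $\mu_s = \mu \measurerestr (X \setminus A) = 0$, i.e. $\mu = \mu_\lambda$.
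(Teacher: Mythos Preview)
Your argument is correct. The paper does not actually supply a proof of this theorem; it simply refers to \cite[2.9.2]{federer} together with Remark~\ref{rk:federer-context}, and what you have written is precisely the classical construction carried out there: extract a minimizing Borel set $A$ for $\mu_\lambda(X)$ in the finite case, verify that minimality forces $\mu(A\cap D)=0$ whenever $\lambda(D)=0$, and patch together via $\sigma$-finiteness. Your explicit verification that properties (i) and (ii) alone yield $\mu_\lambda=\mu\measurerestr A$, using Borel regularity of $\mu$ for one inequality and of $\lambda$ for the other, is clean and is exactly the sort of adaptation to the locally finite, separable setting that the paper gestures at in Remark~\ref{rk:federer-context}.
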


\begin{proof}
For a proof we refer to~\cite[2.9.2]{federer} and Remark~\ref{rk:federer-context}.
\end{proof}

\medskip
We also recall that the pair $(\mu_\lambda,\mu_s)$ given by Theorem~\ref{thm:decomposition-meas} is the unique pair of measures in $M$ such that $\mu = \mu_\lambda + \mu_s$ with $\mu_\lambda \ll \lambda$ and $\mu_s \perp \lambda$. It is called the Lebesgue decomposition of $\mu$ relative to $\lambda$.

\medskip

Let $\lambda$, $\mu\in M$. The upper and lower derivatives of $\mu$ with respect to $\lambda$ at a point $x\in X$ are defined by
\begin{gather*}
\overline{D}(\mu,\lambda,x): = \limsup_{r \rightarrow 0} \frac{\mu(B(x,r))}{\lambda(B(x,r))}, \\
\underline{D} (\mu,\lambda,x) : = \liminf_{r \rightarrow 0} \frac{\mu(B(x,r))}{\lambda(B(x,r))}.
\end{gather*}
At points $x$ where the limit exists we define the derivative of $\mu$ with respect to $\lambda$ by $$D(\mu,\lambda,x) := \overline{D}(\mu,\lambda,x) = \underline{D} (\mu,\lambda,x).$$
Here we interpret $0/0 = 0$.

\medskip

The main result in this section is given in the following theorem.

\begin{theorem} \label{thm:diff-vitali-meas}
Assume that $(X,d)$ is separable. Let $\lambda\in M$ be of Vitali type with respect to $d$. Then for every $\mu \in M$, 
\begin{align}
&0 \leq D(\mu,\lambda,x) < \infty \;\; \text{for } \lambda\text{-a.e. } x\in X, \label{e:diff-vitali-meas_1}\\
&D(\mu,\lambda,\cdot) \text{ is } \lambda \text{-measurable},\label{e:diff-vitali-meas_2} \\
&\mu_\lambda (A) = \int_A D(\mu,\lambda,x)\,d\lambda x \;\; \text{for every } \lambda \text{-measurable set } A.\label{e:diff-vitali-meas_3}
\end{align}
\end{theorem}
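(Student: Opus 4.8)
The engine of the proof is the following pair of comparison estimates, valid for \emph{arbitrary} sets $A\subset X$ and every $0<t<\infty$: \,(i)\, if $\underline D(\mu,\lambda,x)\le t$ for all $x\in A$, then $\mu_\lambda(A)\le t\,\lambda(A)$; \,(ii)\, if $\overline D(\mu,\lambda,x)\ge t$ for all $x\in A$, then $\mu_\lambda(A)\ge t\,\lambda(A)$. To prove (i), fix $\delta,\epsilon>0$ and, assuming $\lambda(A)<\infty$ (otherwise there is nothing to prove), use Borel regularity and outer regularity of $\lambda$ to pick an open $U\supset A$ with $\lambda(U)\le\lambda(A)+\epsilon$. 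The hypothesis $\underline D\le t$ entails that each $x\in A$ is the center of arbitrarily small balls $B(x,r)\subset U$ with $\mu(B(x,r))\le(t+\delta)\lambda(B(x,r))$; these balls form a family to which the Vitali type property applies, yielding a countable disjointed subfamily $\f$ whose union $C$ is a Borel set with $\lambda(A\setminus C)=0$. By the very definition of $\mu_\lambda$ and countable additivity on the disjoint balls, $\mu_\lambda(A)\le\mu(C)=\sum_{B\in\f}\mu(B)\le(t+\delta)\sum_{B\in\f}\lambda(B)=(t+\delta)\lambda(C)\le(t+\delta)\lambda(U)$; letting $\epsilon,\delta\downarrow0$ gives (i). For (ii) one argues dually: given a Borel set $C$ with $\lambda(A\setminus C)=0$ and $\mu(C)<\infty$ (otherwise the infimum defining $\mu_\lambda(A)$ is $+\infty$), outer regularity of $\mu$ (which also holds in $M$ on separable spaces) provides an open $V\supset C$ with $\mu(V)\le\mu(C)+\epsilon$; the balls $B(x,r)\subset V$ centered at points of $A\cap C$ with $\mu(B(x,r))\ge(t-\delta)\lambda(B(x,r))$ again form a Vitali family, and the resulting disjoint cover yields $\mu(C)+\epsilon\ge(t-\delta)\lambda(A\cap C)=(t-\delta)\lambda(A)$. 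Taking the infimum over $C$ and letting $\epsilon,\delta\downarrow0$ gives (ii).

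With (i) and (ii) in hand, \eqref{e:diff-vitali-meas_1} is immediate. Cover $X$ by countably many open sets $V_k$ with $\lambda(V_k)<\infty$ and $\mu(V_k)<\infty$ (intersect the two finite-measure covers provided by $\sigma$-finiteness). For rationals $s<t$ the set $A_{s,t}=\{\underline D\le s\}\cap\{\overline D\ge t\}$ satisfies $t\,\lambda(A_{s,t}\cap V_k)\le\mu_\lambda(A_{s,t}\cap V_k)\le s\,\lambda(A_{s,t}\cap V_k)$ by (ii) and (i), and since $\mu_\lambda(V_k)\le\mu(V_k)<\infty$ all quantities are finite; as $s<t$ this forces $\lambda(A_{s,t}\cap V_k)=0$, hence $\lambda(A_{s,t})=0$. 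Summing over the countably many pairs $s<t$ shows $\underline D=\overline D$ $\lambda$-a.e. Likewise (ii) gives $t\,\lambda(\{\overline D\ge t\}\cap V_k)\le\mu(V_k)$, so $\lambda(\{\overline D=\infty\}\cap V_k)=0$ for every $k$; thus $D(\mu,\lambda,\cdot)$ exists, is nonnegative, and is finite $\lambda$-a.e.

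Granting for the moment the $\lambda$-measurability \eqref{e:diff-vitali-meas_2}, the integral formula \eqref{e:diff-vitali-meas_3} follows by a standard dyadic layering. It suffices to treat a measurable $A$ contained in one $V_k$, where both $\mu_\lambda(A)$ and $\int_A D\,d\lambda$ are finite, and then sum by $\sigma$-additivity. Fix $\tau>1$ and set $A_n=A\cap\{\tau^n\le D<\tau^{n+1}\}$ for $n\in\Z$; these are measurable and, by (i) and (ii) applied with $t=\tau^{n+1}$ and $t=\tau^n$, satisfy $\tau^n\lambda(A_n)\le\mu_\lambda(A_n)\le\tau^{n+1}\lambda(A_n)$, while also $\tau^n\lambda(A_n)\le\int_{A_n}D\,d\lambda\le\tau^{n+1}\lambda(A_n)$. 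Hence $\mu_\lambda(A_n)$ and $\int_{A_n}D\,d\lambda$ differ by at most $(\tau-1)\int_{A_n}D\,d\lambda$; summing over $n$ and letting $\tau\downarrow1$ gives $\mu_\lambda(A\cap\{0<D<\infty\})=\int_{A\cap\{0<D<\infty\}}D\,d\lambda$. On $\{D=0\}$ estimate (i) with $t\downarrow0$ forces $\mu_\lambda=0$, and $\{D=\infty\}$ is $\lambda$-null hence $\mu_\lambda$-null because $\mu_\lambda\ll\lambda$ by Theorem~\ref{thm:decomposition-meas}; both contribute zero to each side, and \eqref{e:diff-vitali-meas_3} follows.

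The main obstacle is precisely the measurability statement \eqref{e:diff-vitali-meas_2}, used tacitly above to partition $A$ along the level sets of $D$: a priori the maps $x\mapsto\mu(B(x,r))$ and $x\mapsto\lambda(B(x,r))$ need not be Borel, and $\overline D,\underline D$ arise through a $\limsup$ and a $\liminf$ over a continuum of radii, so their level sets are not evidently measurable. I would resolve this in the separable setting by noting that, for fixed $r$, $x\mapsto\mu(B(x,r))$ is upper semicontinuous and $x\mapsto\mu(\{y:d(x,y)<r\})$ is lower semicontinuous on any set of finite measure, which yields Borel measurability of suitable monotone envelopes and, after passing to the limit $r\downarrow0$, of $\overline D$ and $\underline D$. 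Alternatively, and perhaps more cleanly, one can invoke the Radon--Nikodym theorem to produce a $\lambda$-measurable density $g$ of $\mu_\lambda\ll\lambda$, and then use (i), (ii) together with $\mu_\lambda\ll\lambda$ to identify $D=g$ $\lambda$-a.e., which simultaneously delivers \eqref{e:diff-vitali-meas_2} and \eqref{e:diff-vitali-meas_3}.
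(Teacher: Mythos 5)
Your proposal is correct, and its overall architecture mirrors the paper's: comparison estimates between densities and measures derived from the Vitali property, then a level-set argument for \eqref{e:diff-vitali-meas_1} and a dyadic layering (with ratio tending to $1$, handling $\{D=0\}$ via absolute continuity and $\{D=\infty\}$ via \eqref{e:diff-vitali-meas_1}) for \eqref{e:diff-vitali-meas_3}. The genuine difference is how you obtain the two comparison estimates. The paper proves a single lemma (Lemma~\ref{lem:upper-bound-lower-density-vs-measure}) for an arbitrary pair $\alpha,\beta\in M$: if $\underline{D}(\alpha,\beta,\cdot)<c$ on $A$ then $\alpha_\lambda(A)\le c\,\beta_\lambda(A)$; your estimate (i) is the case $(\alpha,\beta)=(\mu,\lambda)$, and the paper gets your estimate (ii) for free by swapping roles, since $\overline{D}(\mu,\lambda,x)>c$ implies $\underline{D}(\lambda,\mu,x)<1/c$ and $\lambda_\lambda=\lambda$. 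You instead prove (ii) by a second, dual covering argument, using outer regularity of $\mu$, a Vitali family of balls on which $\mu\ge(t-\delta)\lambda$ inside an open neighborhood of an admissible Borel set $C$, and the infimum definition of $\mu_\lambda$. Both are sound; the paper's symmetry trick is shorter and explains why its lemma is formulated for a general pair with $\beta_\lambda$ (rather than $\lambda$) on the right-hand side, while your route stays entirely at the level of the pair $(\mu,\lambda)$ and of the definition of $\mu_\lambda$.

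Two technical points deserve flagging, neither fatal. First, when you write $\mu_\lambda(A\cap\{0<D<\infty\})=\sum_n\mu_\lambda(A_n)$ (and similarly when summing over the $V_k$), you need countable additivity of $\mu_\lambda$ over disjoint $\lambda$-measurable sets, i.e., that $\lambda$-measurable sets are $\mu_\lambda$-measurable; the paper cites~\cite[2.9.7]{federer} for exactly this point. (In the separable $\sigma$-finite setting it follows since a $\lambda$-measurable set differs from a Borel set by a $\lambda$-null, hence $\mu_\lambda$-null, set.) Second, concerning \eqref{e:diff-vitali-meas_2}: the paper itself defers to~\cite[2.9.6]{federer}, so your sketch is not at a disadvantage there, but your Radon--Nikodym identification $D=g$ needs one more ingredient than stated. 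The level sets $\{g\le s\}\cap\{\overline{D}\ge t\}$ and $\{g\ge t\}\cap\{\underline{D}\le s\}$ are not known to be $\lambda$-measurable at that stage (their measurability is what is being proved), so to compare $\mu_\lambda$ of such sets with $\int g\,d\lambda$ you must pass to Borel hulls (with respect to $\lambda$ for the first family, with respect to $\mu_\lambda$ for the second), while your (i) and (ii), being valid for arbitrary sets, supply the other side of each inequality. With that adjustment the identification works and delivers \eqref{e:diff-vitali-meas_2} and \eqref{e:diff-vitali-meas_3} simultaneously, which is a self-contained alternative to the paper's reliance on Federer.
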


Before proving Theorem~\ref{thm:diff-vitali-meas}, we start with a useful lemma that has its own interest. It allows to deduce for two given measures $\alpha$, $\beta \in M$ a global comparison between $\alpha_\lambda (A)$ and $\beta_\lambda (A)$ if some pointwise estimate of $\underline{D} (\alpha,\beta,x)$ is available on $A$. Apart from Lemma~\ref{lem:upper-bound-lower-density-vs-measure}, the assumption that $\lambda$ is of Vitali type with respect to $d$ will not be used elsewhere in the proof of Theorem~\ref{thm:diff-vitali-meas}.

\begin{lemma} \label{lem:upper-bound-lower-density-vs-measure} Assume that $(X,d)$ is separable. Let $\lambda\in M$ be of Vitali type with respect to $d$. Let $\alpha, \beta \in M$ and $0<c<\infty$. Let $A\subset X$ be such that $\underline{D} (\alpha,\beta,x) <c$ for every $x\in A$. Then $\alpha_\lambda (A) \leq c\beta_\lambda (A)$.
\end{lemma}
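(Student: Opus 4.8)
The plan is to work with the infimum defining $\alpha_\lambda(A)$ directly: I want to exhibit, for each $\epsilon>0$, a single Borel set $G$ that is both an admissible competitor in the definition of $\alpha_\lambda(A)$ (i.e.\ $\lambda(A\setminus G)=0$) and satisfies $\alpha(G)\leq c\,\beta(U)$ for an open set $U$ of nearly minimal $\beta_\lambda$-mass. We may assume $\beta_\lambda(A)<\infty$, since otherwise the asserted inequality is trivial.

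First I would set up the ambient open set. By the definition of $\beta_\lambda$, I would choose a Borel set $C$ with $\lambda(A\setminus C)=0$ and $\beta(C)<\beta_\lambda(A)+\epsilon$; in particular $\beta(C)<\infty$. Since $(X,d)$ is separable and $\beta\in M$, the measure $\beta$ is outer regular, so I may pick an open set $U\supset C$ with $\beta(U)<\beta(C)+\epsilon<\beta_\lambda(A)+2\epsilon$. Putting $A':=A\cap C$, we have $\lambda(A\setminus A')=0$ and $A'\subset U$.

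Next I would build a fine cover of $A'$ by ``good'' balls and apply the Vitali type property. For each $x\in A'$, since $U$ is open and $\underline{D}(\alpha,\beta,x)<c$, there are arbitrarily small $r>0$ with $B(x,r)\subset U$ and $\alpha(B(x,r))<c\,\beta(B(x,r))$; collecting all such balls into a family $\bb$, each point of $A'$ is the center of arbitrarily small balls of $\bb$. Because $\lambda$ is of Vitali type with respect to $d$, there is a countable disjointed subfamily $\f\subset\bb$ with $\lambda\bigl(A'\setminus\bigcup_{B\in\f}B\bigr)=0$. Let $G:=\bigcup_{B\in\f}B$, an $F_\sigma$, hence Borel, set with $G\subset U$. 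Since $\f$ is disjointed and closed balls are Borel, countable additivity on disjoint measurable sets gives $\alpha(G)=\sum_{B\in\f}\alpha(B)\leq c\sum_{B\in\f}\beta(B)=c\,\beta(G)\leq c\,\beta(U)$. Finally, $\lambda(A\setminus G)\leq\lambda(A\setminus A')+\lambda(A'\setminus G)=0$, so $G$ is admissible in the infimum defining $\alpha_\lambda$, whence $\alpha_\lambda(A)\leq\alpha(G)\leq c\,\beta(U)<c\bigl(\beta_\lambda(A)+2\epsilon\bigr)$. Letting $\epsilon\downarrow 0$ yields $\alpha_\lambda(A)\leq c\,\beta_\lambda(A)$.

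The main obstacle is coordinating the three ``essential infimum'' objects ($\alpha_\lambda$, $\beta_\lambda$, and the $\lambda$-almost covering) onto one Borel set $G$: the density hypothesis only supplies arbitrarily small good balls \emph{pointwise}, and it is precisely the Vitali property that upgrades this fine cover into a disjoint family covering $\lambda$-almost all of $A'$, so that disjointness converts the pointwise bound $\alpha(B)<c\,\beta(B)$ into the additive global estimate $\alpha(G)\leq c\,\beta(G)$. Confining the balls to an open $U$ of controlled $\beta$-measure, via outer regularity applied to the near-optimal Borel set $C$, is what ties $\beta(G)$ back to $\beta_\lambda(A)$; getting $G$ simultaneously inside $U$ and $\lambda$-almost covering $A$ is the one point that requires care.
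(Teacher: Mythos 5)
Your proof is correct and takes essentially the same route as the paper's: a near-optimal open set $U$ obtained from the definition of $\beta_\lambda$ together with outer regularity of $\beta$, a fine cover of $A\cap C$ by balls $B\subset U$ with $\alpha(B)<c\,\beta(B)$, the Vitali type property to extract a countable disjointed almost-cover, and summation over the disjoint family. The only (cosmetic) difference is the final step: you bound $\alpha_\lambda(A)\leq\alpha(G)$ by plugging the Borel set $G=\bigcup_{B\in\f}B$ directly into the infimum defining $\alpha_\lambda$, whereas the paper invokes $\alpha_\lambda\ll\lambda$ from Theorem~\ref{thm:decomposition-meas} to get $\alpha_\lambda(A)\leq\alpha_\lambda\bigl(\bigcup_i B_i\bigr)\leq\alpha\bigl(\bigcup_i B_i\bigr)$, so your variant is marginally more self-contained.
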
 

\begin{proof}
For each $\epsilon >0$, one can apply the definition of $\beta_\lambda$ in conjunction with the regularity of $\beta$ to find an open set $W$ such that $\lambda (A\setminus W)=0$ and $\beta (W) \leq  \beta_\lambda (A) + \epsilon$. Since $\lambda$ is of Vitali type, one can find countably many disjointed balls $B_i \subset W$ such that $\lambda((A\cap W) \setminus \cup_i B_i) = 0$ and $\alpha(B_i) <c \beta(B_i)$ for each $i$. Since $\alpha_\lambda \ll \lambda$, we get
\begin{multline*}
\alpha_\lambda (A)  \leq \alpha_\lambda \left(\bigcup_i B_i \right) \leq \alpha \left(\bigcup_i B_i \right)\\
 \leq \sum_i \alpha (B_i) \leq c \sum_i \beta (B_i) = c \beta \left(\bigcup_i B_i \right) \leq c \beta (W) \leq c(\beta_\lambda (A) +\epsilon).
\end{multline*}
The conclusion follows since this holds for every $\epsilon>0$.
\end{proof}

\begin{corollary} \label{cor:bounds-lower-upper-density-vs-meas}
Assume that $(X,d)$ is separable. Let $\lambda\in M$ be of Vitali type with respect to $d$. Let $\mu \in M$ and $0<c<\infty$. Then
\begin{equation} \label{e:bound-lower-density-vs-meas}
\underline{D}(\mu,\lambda,x) <c \text{ for every } x\in A \Rightarrow \mu_\lambda (A) \leq c \lambda (A)
\end{equation}
\begin{equation} \label{e:bound-upper-density-vs-meas}
\overline{D}(\mu,\lambda,x) > c \text{ for every } x\in A \Rightarrow \mu_\lambda (A) \geq c \lambda (A)
\end{equation}
\end{corollary}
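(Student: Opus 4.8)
The plan is to derive both implications directly from Lemma~\ref{lem:upper-bound-lower-density-vs-measure} by choosing the two measures appropriately, after first recording the elementary identity $\lambda_\lambda = \lambda$ as set functions. The inequality $\lambda_\lambda \leq \lambda$ is immediate: it is the already noted general fact $\mu_\lambda \leq \mu$ applied with $\mu = \lambda$. For the reverse inequality, let $C$ be any Borel set with $\lambda(A\setminus C) = 0$; by countable subadditivity $\lambda(A) \leq \lambda(A\cap C) + \lambda(A\setminus C) = \lambda(A\cap C) \leq \lambda(C)$, and taking the infimum over all such $C$ gives $\lambda(A) \leq \lambda_\lambda(A)$. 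Hence $\lambda_\lambda(A) = \lambda(A)$ for every $A\subset X$.

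For~\eqref{e:bound-lower-density-vs-meas} I would apply Lemma~\ref{lem:upper-bound-lower-density-vs-measure} with $\alpha = \mu$ and $\beta = \lambda$. Since $\underline{D}(\mu,\lambda,x) < c$ for every $x\in A$, the lemma yields $\mu_\lambda(A) \leq c\,\lambda_\lambda(A)$, and the identity $\lambda_\lambda = \lambda$ turns this into the desired bound $\mu_\lambda(A) \leq c\,\lambda(A)$.

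For~\eqref{e:bound-upper-density-vs-meas} the idea is to interchange the roles of $\mu$ and $\lambda$ and pass to reciprocals. The key observation is that $\overline{D}(\mu,\lambda,x) > c$ implies $\underline{D}(\lambda,\mu,x) < 1/c$: choosing radii $r_n \downarrow 0$ along which $\mu(B(x,r_n))/\lambda(B(x,r_n))$ tends to $\overline{D}(\mu,\lambda,x) \in (c,\infty]$, the reciprocals $\lambda(B(x,r_n))/\mu(B(x,r_n))$ tend to a limit strictly below $1/c$, which forces $\underline{D}(\lambda,\mu,x) < 1/c$. Applying Lemma~\ref{lem:upper-bound-lower-density-vs-measure} with $\alpha = \lambda$, $\beta = \mu$ and constant $1/c$ then gives $\lambda_\lambda(A) \leq (1/c)\,\mu_\lambda(A)$; using $\lambda_\lambda = \lambda$ once more and multiplying by $c$ yields $c\,\lambda(A) \leq \mu_\lambda(A)$.

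The main obstacle I anticipate is making this reciprocal step rigorous in the degenerate situations allowed by the convention $0/0 = 0$. One has to check that the condition $\overline{D}(\mu,\lambda,x) > c > 0$ rules out balls with $\mu(B(x,r_n)) = 0$ along the chosen sequence, and to fix a consistent reading of the quotient when $\lambda(B(x,r_n)) = 0$ while $\mu(B(x,r_n)) > 0$, so that the quotient is $+\infty$ and its reciprocal is $0$. Once these boundary cases are disposed of, the implication $\overline{D}(\mu,\lambda,x) > c \Rightarrow \underline{D}(\lambda,\mu,x) < 1/c$ reduces to the elementary fact that the $\liminf$ of the reciprocals equals the reciprocal of the $\limsup$, and the remainder is a direct invocation of the lemma.
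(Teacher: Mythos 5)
Your proposal is correct and follows exactly the paper's own route: both implications are derived from Lemma~\ref{lem:upper-bound-lower-density-vs-measure}, the first by taking $(\alpha,\beta)=(\mu,\lambda)$ together with the identity $\lambda_\lambda=\lambda$, and the second by swapping the roles via the implication $\overline{D}(\mu,\lambda,x)>c \Rightarrow \underline{D}(\lambda,\mu,x)<1/c$. The only difference is that you spell out details the paper leaves implicit (the proof of $\lambda_\lambda=\lambda$ and the handling of the $0/0$ and $+\infty$ conventions in the reciprocal step), and these details are handled correctly.
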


\begin{proof}
Since $\lambda_\lambda = \lambda$,~\eqref{e:bound-lower-density-vs-meas} follows from Lemma~\ref{lem:upper-bound-lower-density-vs-measure}. For~\eqref{e:bound-upper-density-vs-meas}, we have $\overline{D}(\mu,\lambda,x) > c \Rightarrow \underline{D}(\lambda,\mu,x) <1/c$ and it follows from Lemma~\ref{lem:upper-bound-lower-density-vs-measure} that $\lambda(A) =\lambda_\lambda(A) \leq \mu_\lambda(A) / c$.
\end{proof}

\medskip

\begin{proof} [Proof of Theorem~\ref{thm:diff-vitali-meas}] To prove~\eqref{e:diff-vitali-meas_1} let us first consider $A\subset X$ such that $\lambda(A)<\infty$ and $\mu(A)<\infty$. Let $0<s<t<\infty$ and set
$$A_{s,t} := \{x\in A : \underline{D}(\mu,\lambda,x) <s<t<\overline{D}(\mu,\lambda,x)\}.$$
By Corollary~\ref{cor:bounds-lower-upper-density-vs-meas}, $$t\lambda (A_{s,t}) \leq \mu_\lambda(A_{s,t}) \leq s \lambda (A_{s,t}) < \infty.$$
Since $s<t$, this implies $\lambda(A_{s,t})= 0$ and hence $$\lambda \left(\bigcup_{\substack{0<s<t<\infty \\ s,t\in\Q}} A_{s,t} \right) = 0.$$
Let $n\geq 1$ be a positive integer and set $$A_n := \{x\in A : \overline{D}(\mu,\lambda,x) > n\} \, \text{ and } \, A_\infty := \{x\in A : \overline{D}(\mu,\lambda,x) = \infty\}.$$ By~\eqref{e:bound-upper-density-vs-meas}, we have $n\lambda(A_n) \leq \mu_\lambda(A_n)\leq \mu(A) < \infty$ hence $$\lambda(A_\infty) = \lim_{n\rightarrow +\infty} \lambda(A_n) = 0.$$ Then~\eqref{e:diff-vitali-meas_1} follows noting that $$A \setminus \{x\in A : 0\leq D(\mu,\lambda,x)  < \infty\} \subset A_\infty \bigcup \left(\bigcup_{\substack{0<s<t<\infty \\ s,t\in\Q}} A_{s,t}\right)$$ and recalling that $X$ can be written as a countable union of sets with finite $\lambda$ and $\mu$ measure. 

\medskip

For a proof of~\eqref{e:diff-vitali-meas_2}, we refer to~\cite[2.9.6]{federer} and Remark~\ref{rk:federer-context}. 

\medskip

To prove~\eqref{e:diff-vitali-meas_3},  let $A$ be $\lambda$-measurable. Then $A$ is $\mu_\lambda$-measurable, see~\cite[2.9.7]{federer} and Remark~\ref{rk:federer-context}. Let $1<t<\infty$. For $p\in\Z$, set $$A_p := \{x\in A : t^p \leq D(\mu,\lambda,x) < t^{p+1}\}.$$
By~\eqref{e:diff-vitali-meas_1}, we have 
\begin{equation*}
\int_A D(\mu,\lambda,x)\, d\lambda x = \sum_p \int_{A_p} D(\mu,\lambda,x)\, d\lambda x \leq \sum_p t^{p+1} \lambda(A_p)
\end{equation*}
and $t^p \lambda(A_p) \leq \mu_\lambda (A_p)$ by~\eqref{e:bound-upper-density-vs-meas}. Hence \begin{equation} \label{e:upper-bound}
\int_A D(\mu,\lambda,x)\, d\lambda x \leq t \sum_p \mu_\lambda (A_p) \leq t \mu_\lambda(A).
\end{equation}
On the other hand, since $\mu_\lambda \ll \lambda$,~\eqref{e:diff-vitali-meas_1} and~\eqref{e:bound-lower-density-vs-meas} imply that $\mu_\lambda(\{x\in A : D(\mu,\lambda,x) = 0\}) = 0$. Hence it follows from~\eqref{e:bound-lower-density-vs-meas} that
\begin{equation} \label{e:lower-bound}
\begin{split}
\mu_\lambda (A ) &= \sum_p \mu_\lambda (A_p) \leq t^{p+1} \sum_p \lambda (A_p) \\
&\leq t \sum_p \int_{A_p} D(\mu,\lambda,x)\, d\lambda x = t \int_A D(\mu,\lambda,x)\, d\lambda x.
\end{split}
\end{equation}
Then~\eqref{e:diff-vitali-meas_3} follows from~\eqref{e:upper-bound} and~\eqref{e:lower-bound} since these hold for every $1<t<\infty$. 
\end{proof}

\bigskip

We stress that, in addition to the classical Radon-Nikodym theorem, Theorem~\ref{thm:diff-vitali-meas} gives a concrete representation of the density of the absolutely continuous part of the Lebesgue decomposition of $\mu$ relative to $\lambda$ in terms of the derivative $D(\mu,\lambda,\cdot)$. 

\medskip

Furthermore, the validity of the differentation theorem for Vitali type measures comes as a direct consequence of Theorem~\ref{thm:diff-vitali-meas}.

\begin{corollary} \label{cor:diff-thm-from-vitali-type}
Assume that $(X,d)$ is separable. Let $\lambda\in M$ be of Vitali type with respect to $d$. Then the differentiation theorem holds for $\lambda$.
\end{corollary}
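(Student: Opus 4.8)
The plan is to deduce the differentiation theorem directly from the already established Theorem~\ref{thm:diff-vitali-meas}, the bridge being the identification of a locally integrable function with an absolutely continuous measure. As a first reduction, writing $f = f^+ - f^-$ with $f^\pm \in L^1_{loc}(\lambda)$ nonnegative and using the linearity of the averages $\lambda(B(x,r))^{-1}\int_{B(x,r)} (\cdot)\, d\lambda y$, it suffices to prove~\eqref{e:lebesgue-diff} for nonnegative $f$: the exceptional set for a general $f$ is then contained in the union of the two $\lambda$-null exceptional sets associated with $f^+$ and $f^-$.

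Second, given $f\geq 0$ in $L^1_{loc}(\lambda)$, I would introduce the measure $\mu$ defined by $\mu(A) := \int_A f\, d\lambda$, extended to arbitrary subsets via Borel regularity. Two facts need checking: that $\mu\in M$, and that $\mu\ll\lambda$. Local finiteness of $\mu$ is exactly the content of $f\in L^1_{loc}(\lambda)$, since every point has a neighborhood $V$ with $\mu(V) = \int_V f\, d\lambda < \infty$, while Borel regularity of $\mu$ is the standard fact that integrating a nonnegative Borel function against a Borel regular measure yields a Borel regular measure. Absolute continuity $\mu\ll\lambda$ is immediate, as $\lambda(A)=0$ forces $\int_A f\, d\lambda = 0$.

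Third, I would apply Theorem~\ref{thm:diff-vitali-meas} to this $\mu$: the derivative $D(\mu,\lambda,x)$ exists and is finite for $\lambda$-a.e. $x$, and $\mu_\lambda(A) = \int_A D(\mu,\lambda,x)\, d\lambda x$ for every $\lambda$-measurable set $A$. Since $\mu\ll\lambda$, Theorem~\ref{thm:decomposition-meas} gives $\mu = \mu_\lambda$, whence $\int_A f\, d\lambda = \int_A D(\mu,\lambda,\cdot)\, d\lambda$ for all $\lambda$-measurable $A$. As $\lambda$ is $\sigma$-finite (separability of $(X,d)$ together with $\lambda\in M$), this equality of integrals over all measurable sets forces $f = D(\mu,\lambda,\cdot)$ $\lambda$-a.e. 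Finally, recalling $\mu(B(x,r)) = \int_{B(x,r)} f\, d\lambda$, the very definition of $D(\mu,\lambda,x)$ as the limit of $\mu(B(x,r))/\lambda(B(x,r))$ yields~\eqref{e:lebesgue-diff} at every point where $D(\mu,\lambda,x) = f(x)$, that is $\lambda$-a.e.

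The genuine analytic content is carried entirely by Theorem~\ref{thm:diff-vitali-meas}; the steps requiring care here are the bookkeeping of the reduction to $f\geq 0$ and, above all, the identification of $f$ with $D(\mu,\lambda,\cdot)$ as elements of $L^1_{loc}(\lambda)$, which relies on $\sigma$-finiteness to pass from equality of integrals over all measurable sets to almost everywhere equality of the integrands.
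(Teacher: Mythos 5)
Your proof is correct and follows essentially the same route as the paper: reduce to nonnegative $f$, define $\mu(A)=\int_A f\,d\lambda$ (the paper uses the upper integral $\int_A^*$ to define it on all subsets), invoke Theorem~\ref{thm:diff-vitali-meas} together with $\mu=\mu_\lambda$ from Theorem~\ref{thm:decomposition-meas}, and identify $f=D(\mu,\lambda,\cdot)$ $\lambda$-a.e. Your explicit verifications that $\mu\in M$, $\mu\ll\lambda$, and that $\sigma$-finiteness justifies passing from equality of integrals to a.e.\ equality are details the paper leaves implicit, not deviations from its argument.
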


\begin{proof}
It is sufficient to prove that~\eqref{e:lebesgue-diff} holds for nonnegative $f\in L^1_{loc} (\lambda)$. For such an $f$, we define $\mu \in M$ by $$\mu(A): = \int_A^* f(x)\, d\lambda x$$ for $A\subset X$.
 We infer from Theorem~\ref{thm:diff-vitali-meas} that 
$$\int_A f(x)\, d\lambda x = \mu (A) = \mu_\lambda (A) = \int_A D(\mu,\lambda,x) \, d\lambda x$$ for every $\lambda$-measurable set A. Hence $$f(x) = D(\mu,\lambda,x) \,\text{ for }\lambda \text{-a.e. }x\in X$$
and it follows that $$f(x) = \lim_{r\downarrow 0} \frac{1}{\lambda(B(x,r))} \int_{B(x,r)} f(y) \, d\lambda y \,\text{ for }\lambda \text{-a.e. }x\in X.$$
\end{proof}

 \begin{remark} \label{rk:federer-context}
 In~\cite[2.9]{federer}, $(X,d)$ is not assumed to be separable and measures under consideration are Borel regular measures $\lambda$ such that every bounded set has 
finite $\lambda$ measure. Namely, the main results in~\cite[2.9]{federer} are gathered in the following theorem.
 
\begin{theorem} \label{thm:diff-thm-from-vitali-fed}
 Let $(X,d)$ be a metric space. Let $\lambda$, $\mu$ be Borel regular measures over $X$ such that every bounded set has finite $\lambda$ and $\mu$ measure. Assume that $\lambda$ is of Vitali type with respect to $d$. Then the conclusions of Theorem~\ref{thm:decomposition-meas} and Theorem~\ref{thm:diff-vitali-meas} hold true. Furthermore, if $f$ is an $\overline\R$-valued $\lambda$-measurable function such that $$\int_A |f(x)|\, d\lambda x <\infty$$ for every bounded $\lambda$-measurable set $A$, then 
\begin{equation*}
\lim_{r\downarrow 0} \frac{1}{\lambda(B(x,r))} \int_{B(x,r)} f(y) \, d\lambda y = f(x) \;\; \text{ for } \lambda \text{-a.e. } x\in X.
\end{equation*}
\end{theorem}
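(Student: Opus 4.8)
The plan is to notice that, throughout Section~\ref{sect:Vitali-type-meas}, separability was used only through two structural consequences: that $\lambda$ and $\mu$ are $\sigma$-finite with $X$ a countable union of sets of finite measure, and that Borel regular measures are outer regular. Once I check that both features persist under Federer's hypotheses, the proofs of Lemma~\ref{lem:upper-bound-lower-density-vs-measure}, Corollary~\ref{cor:bounds-lower-upper-density-vs-meas} and Theorem~\ref{thm:diff-vitali-meas} transfer unchanged, while the parts already deferred to~\cite{federer} (Theorem~\ref{thm:decomposition-meas}, the measurability statement~\eqref{e:diff-vitali-meas_2}, and the $\mu_\lambda$-measurability of $\lambda$-measurable sets) hold there directly in this more general context. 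The final statement about $f$ is then obtained exactly as in Corollary~\ref{cor:diff-thm-from-vitali-type}.

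First I would record $\sigma$-finiteness. Fixing any basepoint $x_0\in X$, the function $d(x_0,\cdot)$ is finite-valued, so $X=\bigcup_{n\geq 1}B(x_0,n)$, and each ball $B(x_0,n)$ is bounded, hence of finite $\lambda$- and $\mu$-measure. Thus $X$ is a countable union of sets of finite $\lambda$ and $\mu$ measure --- precisely the property invoked at the end of the proof of~\eqref{e:diff-vitali-meas_1} --- and both measures are $\sigma$-finite. Note that no analogue of the separability-based covering of $X$ by finite-measure opens is needed: the ball exhaustion suffices.

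The key new input is outer regularity: a Borel regular measure on $(X,d)$ that is finite on bounded sets is outer regular. I would establish this first for a finite Borel measure $\nu$, by showing that the class of Borel sets approximable from outside by open sets and from inside by closed sets is a $\sigma$-algebra containing the closed sets --- for closed $C$ one uses the open neighborhoods $\{x:d(x,C)<\delta\}\downarrow C$ together with continuity from above, valid since $\nu$ is finite --- so this class is the full Borel $\sigma$-algebra. I would then pass to the general case along the exhaustion $B(x_0,n)$: given a $\lambda$-measurable $A$ and $\epsilon>0$, decompose $A$ into the annular pieces $A\cap\bigl(B(x_0,n)\setminus B(x_0,n-1)\bigr)$, approximate each from outside by an open set with error $\epsilon\,2^{-n}$ using the finite measure $\lambda\measurerestr B(x_0,n+1)$, and take the union. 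This supplies the open set $W$ with $\lambda(A\setminus W)=0$ and $\beta(W)\leq\beta_\lambda(A)+\epsilon$ that drives the proof of Lemma~\ref{lem:upper-bound-lower-density-vs-measure}, and with it Corollary~\ref{cor:bounds-lower-upper-density-vs-meas} and Theorem~\ref{thm:diff-vitali-meas}.

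Finally, for the differentiation statement I would reduce to nonnegative $f$ by treating the positive and negative parts separately, and for $f\geq 0$ set $\mu(A):=\int_A^* f\,d\lambda$. One checks that $\mu$ is Borel regular, that $\mu(A)\leq\int_A|f|\,d\lambda<\infty$ for every bounded $\lambda$-measurable $A$, and that $\mu\ll\lambda$, so $\mu=\mu_\lambda$. Applying Theorem~\ref{thm:diff-vitali-meas} yields $\int_A f\,d\lambda=\mu(A)=\int_A D(\mu,\lambda,x)\,d\lambda x$ for every $\lambda$-measurable $A$, forcing $f=D(\mu,\lambda,\cdot)$ $\lambda$-a.e., which is exactly the asserted convergence of the averages. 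I expect the genuine obstacle to be the outer regularity statement, since $\sigma$-finiteness is automatic and the remaining steps are formal; the only care required there is to globalize the finite-measure approximation along the ball exhaustion without losing control of the total error.
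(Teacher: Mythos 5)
Your proposal is correct, but it is worth being clear about what the paper itself does: it gives no argument for Theorem~\ref{thm:diff-thm-from-vitali-fed} at all. The theorem sits inside Remark~\ref{rk:federer-context} precisely as a summary of Federer's results, which are proved in~\cite[2.9]{federer} under exactly these hypotheses (no separability, measures finite on bounded sets); the paper's Section~\ref{sect:Vitali-type-meas} is then presented as an adaptation of those arguments to the separable/locally finite setting. You invert this logic and re-derive Federer's version from the adapted proofs, and the inversion is sound because the two ingredients you isolate are exactly where separability entered: (i) the exhaustion of $X$ by countably many sets of finite $\lambda$- and $\mu$-measure, which here indeed follows from $X=\bigcup_{n\geq 1}B(x_0,n)$ since balls are bounded; and (ii) outer regularity, which for a Borel regular measure finite on bounded sets holds on any metric space --- the closed/open approximation $\sigma$-algebra argument for finite Borel measures needs no separability, and your annular globalization works provided the approximating open set for the $n$-th piece is intersected with the open ball $\{x : d(x_0,x)<n+1\}$ so that its excess mass is controlled by $\lambda\measurerestr B(x_0,n+1)$ (you flag this as the delicate point, correctly). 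Your deferrals to Federer for Theorem~\ref{thm:decomposition-meas}, for~\eqref{e:diff-vitali-meas_2}, and for the $\mu_\lambda$-measurability of $\lambda$-measurable sets are not circular, since \cite[2.9.2, 2.9.6, 2.9.7]{federer} are stated and proved exactly under the present hypotheses --- indeed the paper defers the same three points even in its separable setting. The final differentiation statement goes through as in Corollary~\ref{cor:diff-thm-from-vitali-type}, after noting (as you do) that $\mu(A):=\int_A^* f^{\pm}\,d\lambda$ is Borel regular, finite on bounded sets, and absolutely continuous with respect to $\lambda$, so that $\mu=\mu_\lambda$ and $f=D(\mu,\lambda,\cdot)$ $\lambda$-a.e.\ by $\sigma$-finiteness. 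In short: your route buys a self-contained proof in Federer's generality built on the notes' own machinery; the paper's route is a citation, which buys brevity and reflects the fact that this theorem, with this exact set of hypotheses, is Federer's original result rather than something the notes establish independently.
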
 

 It is not difficult to see that minor modifications of the arguments in~\cite[2.9]{federer} can be used to handle slightly different settings. For locally finite Borel regular measures over separable metric spaces, the minor modifications we used in the proof of Theorem~\ref{thm:diff-vitali-meas} are based on the fact that locally finite Borel regular measures over separable metric spaces are $\sigma$-finite and outer regular. Similarly one can also prove that Theorem~\ref{thm:decomposition-meas}, Theorem~\ref{thm:diff-vitali-meas} and Corollary~\ref{cor:diff-thm-from-vitali-type} hold true when considering Radon measures over locally compact separable metric spaces.
 \end{remark}

\section{Doubling measures} \label{sect:doubling-meas}

An important class of Vitali type measures, see Definition~\ref{def:vitali-type-meas}, is given by doubling measures. A classical proof of this result relies on a general covering theorem often called basic $5r$ covering theorem in the literature, see Theorem~\ref{thm:basic-5r-covering}. Our presentation in the present section follows closely~\cite[Chapter~1]{heinonen}.

\medskip

Let $(X,d)$ be a metric space. For a ball $B$ with center $x$ and radius $0<r<\infty$ in $(X,d)$ and $\tau >0$, we denote by $\tau B:= B(x,\tau r)$ the concentric ball with radius $\tau r$.

\medskip

A Borel regular measure $\lambda$ over $X$ is said to be \textit{doubling with respect to $d$} if there is a constant $C\geq 1$ such that $$\lambda(2B) \leq C\lambda (B)\;\; \text{for every ball } B,$$ and $\lambda$ is nondegenerate in the sense that $\lambda(B_1) >0$ and $\lambda(B_2) < \infty$ for some balls $B_1$ and $B_2$. Note that this implies in particular that $0<\lambda(B) <\infty$ for every ball $B$.

\begin{theorem} \label{thm:Vitali-doubling-meas}
Doubling measures over a metric space $(X,d)$ are of Vitali type with respect to $d$.
\end{theorem}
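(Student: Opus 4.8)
The plan is to deduce the Vitali property from the basic $5r$ covering theorem (Theorem~\ref{thm:basic-5r-covering}) together with the doubling inequality and the outer regularity of $\lambda$. Note first that $\lambda$ is locally finite and Borel regular, and that a metric space carrying a doubling measure is separable; hence $\lambda$ is $\sigma$-finite and outer regular in the strong sense that $\lambda(S)=\inf\{\lambda(U): S\subset U,\ U\text{ open}\}$ for \emph{every} $S\subset X$ (this follows from Borel regularity together with outer regularity on Borel hulls). Given $A$ and $\bb$ as in Definition~\ref{def:vitali-type-meas}, I would discard from $\bb$ every ball that is not centered at a point of $A$ or has radius larger than $1$; the fineness hypothesis is preserved, and now all radii are uniformly bounded, so Theorem~\ref{thm:basic-5r-covering} applies to any subfamily. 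The engine of the proof is a one-step lemma producing, in each pass, a finite disjoint subfamily whose closed union captures a fixed proportion of the mass of the portion of $A$ under consideration.

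For the one-step, suppose $S\subset X$ has $0<\lambda(S)<\infty$ and is finely covered by balls of $\bb$ lying in a fixed open set $U\supset S$ with $\lambda(U)\le\lambda(S)+\epsilon$ (such $U$ exists by outer regularity). Applying Theorem~\ref{thm:basic-5r-covering} to these balls yields a countable disjoint family $\{B_i\}\subset\bb$ with $S\subset\bigcup_i 5B_i$ and $\bigcup_i B_i\subset U$. Iterating the doubling inequality gives $\lambda(5B_i)\le C^3\lambda(B_i)$, whence $\lambda(S)\le\sum_i\lambda(5B_i)\le C^3\sum_i\lambda(B_i)=C^3\lambda(\bigcup_i B_i)$, so that $\sum_i\lambda(B_i)\ge C^{-3}\lambda(S)$. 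As this sum is finite, I may select finitely many of these balls with union $F$ satisfying $\lambda(F)\ge\tfrac12 C^{-3}\lambda(S)$. The crucial point, and the one that makes the argument work despite the enlargement by a factor $5$, is that the uncovered part of $S$ is controlled not through $S\cap F$ but through the shrunken open hull: since $S\setminus F\subset U\setminus F$ and $F\subset U$ is Borel, $\lambda(S\setminus F)\le\lambda(U)-\lambda(F)\le(1-\tfrac12 C^{-3})\lambda(S)+\epsilon$, which is at most $\theta\lambda(S)$ with $\theta:=1-\tfrac14 C^{-3}\in(0,1)$ once $\epsilon$ is small enough.

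I would then iterate this one step, always working with finite families: a finite union of closed balls is closed, so after $n$ passes the already-chosen balls form a closed set $T_n$, and the still-uncovered part $A\setminus T_n$ lies in the open set $X\setminus T_n$, on which the fineness hypothesis survives. To cope with the possibility $\lambda(A)=\infty$ while still producing a single globally disjoint family, I would fix an increasing exhaustion $O_1\subset O_2\subset\cdots$ of $X$ by open sets of finite measure and organise the passes into rounds: in round $k$ I apply the one-step lemma repeatedly to $S=(A\setminus T)\cap O_k$, each time adjoining a finite disjoint subfamily contained in $(X\setminus T)\cap O_k$ (hence disjoint from everything chosen so far), until $\lambda((A\setminus T)\cap O_k)\le 2^{-k}$; the geometric contraction guarantees this after finitely many passes. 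Concatenating all chosen balls gives a countable disjoint subfamily $\f\subset\bb$, and for every $k$ one has $\lambda((A\setminus\bigcup_{B\in\f}B)\cap O_k)\le 2^{-k}$; letting $k\to\infty$ and using $O_k\uparrow X$ forces $\lambda(A\setminus\bigcup_{B\in\f}B)=0$.

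The main obstacle is precisely the loss in the $5r$ covering theorem: the selected disjoint balls cover $A$ only after dilation by $5$, so one cannot directly bound the measure of the part of $A$ they miss. The device of estimating the uncovered set by $\lambda(U\setminus F)=\lambda(U)-\lambda(F)$, combined with the doubling bound $\lambda(5B)\le C^3\lambda(B)$ and an outer-regular choice of $U$, converts this apparent loss into a genuine contraction factor $\theta<1$. The only remaining care is bookkeeping: keeping the families finite at each pass, to preserve openness of the complement and hence fineness on the remainder, and processing the exhaustion in rounds, so that the countably many finite families assemble into one pairwise disjoint family even when $\lambda(A)=\infty$.
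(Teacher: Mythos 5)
Your proof is correct, but it takes a genuinely different route from the paper's. The paper applies Theorem~\ref{thm:basic-5r-covering} only once, to balls centered on the bounded set $A$: the disjointed family $\{B_i\}$ it produces is countable (disjoint balls of positive measure inside a fixed ball of finite measure), doubling makes $\sum_i\lambda(5B_i)$ a convergent series, and the punchline is the inclusion $A\setminus\bigcup_{i\le N}B_i\subset\bigcup_{j>N}5B_j$, proved via the second assertion of Theorem~\ref{thm:basic-5r-covering} --- every ball of $\bb$ meets a selected ball of at least half its radius --- together with closedness of balls; the uncovered set is then absorbed by the vanishing tail of the series. You never use that half-radius refinement. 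Instead you run the iterative mass-capture scheme that the paper itself uses to prove Theorem~\ref{thm:Vitali-Radon-meas-Rn} for Radon measures on $\R^n$, with the strong Besicovitch property replaced by the covering assertion of the $5r$-theorem, the bound $\lambda(5B)\le C^3\lambda(B)$, and outer regularity of $\lambda$, which together convert the factor-$5$ enlargement into a genuine contraction $\theta<1$. What the paper's route buys is economy of hypotheses: it needs neither separability nor any regularity of $\lambda$ (the statement is for arbitrary metric spaces), and the unbounded case is dispatched by annuli bounded by spheres of $\lambda$-measure zero. What your route buys is a reusable one-step lemma, in the same spirit as the strong Besicovitch iteration, using only the covering part of Theorem~\ref{thm:basic-5r-covering}; the price is the appeal to outer regularity, hence separability. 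Your claim that a space carrying a doubling measure is separable is correct (measure doubling implies metric doubling, which implies separability), but it is an extra lemma that deserves its two-line proof. Finally, note that Theorem~\ref{thm:basic-5r-covering} does not assert countability of the disjointed subfamily: in your one-step lemma countability is automatic --- the selected balls are pairwise disjoint, each of positive $\lambda$-measure by nondegeneracy of doubling measures, and all contained in the finite-measure set $U$ --- and this should be said explicitly, as the paper does in its own proof.
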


A classical proof of Theorem~\ref{thm:Vitali-doubling-meas} relies on the following general covering theorem.

\begin{theorem} \label{thm:basic-5r-covering}
Every family $\bb$ of balls with uniformly bounded diameter, that is, such that $\sup\{ \diam B : B \in \mathcal{B}\} < \infty$,  in a metric space contains a disjointed subfamily $\f \subset \bb$ such that 
$$\bigcup_{B\in \bb} B \subset \bigcup_{B\in \f} 5B$$
and every ball $B$ in $\bb$ meets a ball in $\f$ with radius at least half that of $B$.
\end{theorem}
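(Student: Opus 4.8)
The plan is to prove the basic $5r$ covering theorem by a maximal/greedy selection argument organized by the size of the radii. The key difficulty is that an arbitrary family $\bb$ may contain balls whose radii are not attained by any maximal subfamily (there is no largest ball in general), so one cannot simply pick "the biggest ball, then the biggest disjoint ball, and so on." The standard remedy, which I would follow, is to stratify $\bb$ into countably many subfamilies according to a dyadic scale of radii and run a Zorn's lemma / maximal disjoint selection within each layer.

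Concretely, let $R := \sup\{\diam B : B\in\bb\} < \infty$ be the (finite) bound on diameters. For each integer $j\geq 0$ define the subfamily $$\bb_j := \Bigl\{ B\in\bb : \frac{R}{2^{j+1}} < \diam B \leq \frac{R}{2^{j}}\Bigr\},$$ so that $\bb = \bigcup_{j\geq 0} \bb_j$ and within $\bb_j$ all radii agree up to a factor of two. I would then construct disjointed subfamilies $\f_j \subset \bb_j$ inductively: take $\f_0$ to be a maximal disjointed subfamily of $\bb_0$ (whose existence follows from Zorn's lemma, a chain of disjointed subfamilies having its union as an upper bound), and having chosen $\f_0,\dots,\f_{j-1}$, let $\f_j$ be a maximal disjointed subfamily of those balls in $\bb_j$ that are disjoint from every ball already selected in $\f_0\cup\cdots\cup\f_{j-1}$. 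Finally set $\f := \bigcup_{j\geq 0} \f_j$, which is disjointed by construction.

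The crux is the verification of the covering and radius-matching properties. Fix any $B\in\bb$; then $B\in\bb_j$ for some $j$. By maximality of $\f_j$, the ball $B$ cannot be added to the already-selected balls while preserving disjointness, so $B$ must meet some ball $B'\in\f_0\cup\cdots\cup\f_j$. That ball $B'$ lies in some layer $\bb_i$ with $i\leq j$, whence its radius satisfies $\mathrm{radius}(B')> \tfrac{R}{2^{i+2}} \geq \tfrac{R}{2^{j+2}} \geq \tfrac12\,\mathrm{radius}(B)$, using that $\mathrm{radius}(B)\leq \tfrac{R}{2^{j+1}}$. This gives the "meets a ball of radius at least half" assertion. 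For the inclusion $B\subset 5B'$, I would use the triangle inequality: if $y\in B$ and $z\in B\cap B'$, writing $x,x'$ for the centers, then $d(y,x')\leq d(y,x)+d(x,z)+d(z,x')\leq 2\,\mathrm{radius}(B)+\mathrm{radius}(B') \leq 4\,\mathrm{radius}(B')+\mathrm{radius}(B') = 5\,\mathrm{radius}(B')$, so $y\in 5B'$. Since $B$ was arbitrary, $\bigcup_{B\in\bb} B \subset \bigcup_{B'\in\f} 5B'$.

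I expect the main obstacle to be purely organizational rather than deep: one must be careful that the maximal disjointed subfamily $\f_j$ is selected among balls disjoint from all previously chosen balls (not merely among $\bb_j$ itself), since it is precisely this nested maximality that forces every $B\in\bb_j$ to meet some already-selected ball of comparable-or-larger radius. A secondary subtlety is the bookkeeping on radii: one has $\mathrm{radius}(B') \geq \tfrac12\,\mathrm{radius}(B)$ only because $B'$ comes from a layer $\bb_i$ with $i\leq j$, and the factor-of-two width of each dyadic layer is exactly what makes the constant $5$ (rather than something larger) come out. No completeness or separability of $X$ is needed, and the finiteness of $R$ is used only to guarantee that the layers $\bb_j$ exhaust $\bb$.
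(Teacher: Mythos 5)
Your overall strategy---dyadic stratification of $\bb$ by size, Zorn's lemma to extract in each layer a maximal disjointed subfamily among the balls disjoint from everything previously selected, and a triangle-inequality computation at the end---is precisely the standard proof that the paper points to, namely~\cite[Theorem 1.2]{heinonen}. But there is a genuine gap in the bookkeeping: you stratify by \emph{diameter} and then argue as if the radius of each ball were half its diameter. In a general metric space the radius of a ball (which is part of its data, and is what both conclusions ``$B\subset 5B'$'' and ``radius at least half that of $B$'' refer to) admits \emph{no} upper bound in terms of its diameter: $B(x,r)$ may be a single point, so $\diam B(x,r)$ can be $0$ while $r$ is huge, and in a bounded space $B(x,r)$ equals all of $X$ for every large $r$. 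Only the inequality $\mathrm{rad}\,B\geq\tfrac{1}{2}\diam B$ holds in general, not the reverse. Consequently your claim that ``within $\bb_j$ all radii agree up to a factor of two'' is false, and the key step $\mathrm{radius}(B)\leq R/2^{j+1}$ for $B\in\bb_j$---on which both your half-radius estimate and your derivation of $B\subset 5B'$ rest---is unjustified. A further symptom of the same problem: balls of diameter $0$ lie in no layer $\bb_j$, so in a discrete space your decomposition $\bb=\bigcup_{j\geq 0}\bb_j$ already fails.

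The repair is to run your argument verbatim with radii in place of diameters: set $R:=\sup\{\mathrm{rad}\,B : B\in\bb\}$ and $\bb_j:=\{B\in\bb : R/2^{j+1}<\mathrm{rad}\,B\leq R/2^j\}$. Then $B\in\bb_j$ meets some selected $B'\in\f_i$ with $i\leq j$, so $\mathrm{rad}\,B'>R/2^{i+1}\geq R/2^{j+1}\geq\tfrac{1}{2}\,\mathrm{rad}\,B$, and $d(y,x')\leq d(y,x)+d(x,z)+d(z,x')\leq 2\,\mathrm{rad}\,B+\mathrm{rad}\,B'\leq 5\,\mathrm{rad}\,B'$ gives $B\subset 5B'$; this is exactly the argument in~\cite{heinonen}. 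Note that this needs $R<\infty$, i.e.\ uniformly bounded \emph{radii}, which is how the hypothesis of this classical statement must be read (a known subtlety it inherits from the literature): with bounded diameters but unbounded radii, the covering inclusion can still be salvaged (route the triangle inequality through a point $z\in B\cap B'$ and use $\diam B\leq 2\diam B'\leq 4\,\mathrm{rad}\,B'$), but the half-radius conclusion is genuinely false---in the two-point space $X=\{0,1\}$ with the family $\{B(x,n): x\in X,\ n\geq 1\}$, every disjointed subfamily consists of a single ball, of some fixed radius $n_0$, and it cannot serve the balls of radius larger than $2n_0$.
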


\begin{proof}
The proof relies on Zorn's lemma, see e.g.~\cite[Theorem 1.2]{heinonen}. Note that $\f$ is not asserted to be countable but in applications it often will be.  Constructive proofs are possible under mild additional assumptions on $X$, see for instance~\cite[Theorem 2.1]{mattila} where closed balls are assumed to be compact. 
\end{proof}

\noindent\textit{Proof of Theorem~\ref{thm:Vitali-doubling-meas}.} We assume first that $A$ is bounded. Let $\bb$ be a family of balls such that $\inf \{ r>0 : B(x,r) \in \bb \} = 0$ for $x\in A$. We may assume with no loss of generality that the balls in $\bb$ are centered on $A$ and have uniformly bounded diameter. By Theorem~\ref{thm:basic-5r-covering} we can find a disjointed subfamily $\f \subset \bb$ such that the balls $5B$, $B\in\f$, cover $A$. 
Since the union $\cup_\f B$ is contained in some fixed ball, and balls have positive and finite $\lambda$ measure, the family $\f = \{B_1,B_2,\cdots\}$ is necessarily countable. We infer from the doubling property of $\lambda$ that 
$$\sum_{i\geq 1} \lambda(5B_i) \leq C \sum_{i\geq 1} \lambda(B_i) = C \lambda\left(\bigcup_{i\geq 1} B_i\right) < \infty$$
for some constant $C\geq 1$, hence $$\sum_{i>N} \lambda(5B_i) \rightarrow 0$$ as $N\rightarrow \infty$. Therefore it is sufficient to show that $$A\setminus \bigcup_{i=1}^N B_i \subset \bigcup_{j>N} 5B_j.$$
Let $a\in A \setminus \cup_{i\leq N} B_i$. Since balls in $\bb$ are closed, we can find a ball $B(a,r) \in \bb$ that does not meet $\cup_{i\leq N} B_i$. On the other hand, by Theorem~\ref{thm:basic-5r-covering}, one can choose the family $\f$ in such a way that $B(a,r)$ meets some ball $B_j\in\f$ with radius at least $r/2$. Thus $j>N$ and $B(a,r) \subset 5B_j$ as required.

\medskip

To get rid of the assumption that $A$ is bounded, we fix some $x\in X$ and an increasing sequence of positive numbers $R_0,R_1,\cdots \rightarrow\infty$ so that $\lambda (\{y\in X : d(x,y)=R_k\}) = 0$ for every $k\geq 0$. This is possible since $\lambda (\{y\in X : d(x,y)=R\}) = 0$ for a.e.~$R>0$. We set $U_0 := \{y\in X : d(x,y)<R_0\}$ and $U_k := \{y\in X : R_{k-1} < d(y,x) < R_k\}$ for $k\geq 1$. We apply the above construction to the sets $A\cap U_k$ for $k\geq 0$  to find disjointed countable families of balls $\f_k$ such that the balls in $\f_k$ are contained in $U_k$ and cover $\lambda$-almost all of $A\cap U_k$. Then $\f = \cup_k \f_k$ gives the conclusion. \hfill $\Box$

\begin{remark} \label{rk:asymptotic-doubling-vitali} Theorem~\ref{thm:Vitali-doubling-meas} holds more generally for asymptotically doubling measures, that is, when the doubling condition is relaxed into $$\limsup_{r\rightarrow 0} \frac{\lambda(B(x,2r))}{\lambda(B(x,r))} <\infty \;\; \text{ for } \lambda \text{-a.e. } x\in X,$$ 
see~\cite[2.8.17]{federer}.
\end{remark}

\section{Radon measures in Euclidean spaces} \label{sect:Eulidean-space}

The theory of differentiation of measures originates from works of Besicovitch in the Euclidean space (\cite{B1},~\cite{B2}). Besicovitch proved that every locally finite Borel regular measure, or equivalently Radon measure, over the Euclidean space is of Vitali type and hence satisfies the differentiation theorem. His proof relies on a geometric property  of the Euclidean distance given in Lemma~\ref{lem:WBCP-Rn} below and called weak Besicovitch covering property (WBCP) in the present notes. Lemma~\ref{lem:WBCP-Rn} turns out to imply two stronger covering properties for the Euclidean distance, namely, Theorem~\ref{thm:strong-bcp-Rn}~$(i)$ and Theorem~\ref{thm:strong-bcp-Rn}~$(ii)$, called respectively Besicovitch covering property (BCP) and strong Besicovitch covering property in our terminology. The fact that every Radon measure over $\R^n$ is of Vitali type with respect to the Euclidean distance can then be obtained as a rather simple consequence of Theorem~\ref{thm:strong-bcp-Rn}~$(ii)$, see Theorem~\ref{thm:Vitali-Radon-meas-Rn}. BCP and WBCP  will be studied in more details in the general metric setting in Sections~\ref{sect:finite-dim-metrics} and~\ref{sect:wbcp}. Our presentation in the present section follows~\cite[Chapter 2]{mattila}.

\medskip

We start with a lemma which is quite elementary in the Euclidean setting. However, as we shall see in Section~\ref{sect:finite-dim-metrics}, it turns out to play a central role for the theory of differentiation of arbitrary measures when rephrased in  the metric setting.

\begin{lemma} [weak Besicovitch covering property for the Euclidean distance] \label{lem:WBCP-Rn} Let $\R^n$ be equipped with the Euclidean distance. There exists an integer $K\geq 1$ with the following property. Assume that there exist $k$ points $x_1,\dots,x_k$ in $\R^n$ and $k$ positive numbers $r_1,\dots,r_k$ such that $$x_i\not \in B(x_j,r_j) \;\; \text{for } j\not= i, \; \text{and } \bigcap_{i=1}^k B(x_i,r_i) \not= \emptyset.$$ Then $k\leq K$.
\end{lemma}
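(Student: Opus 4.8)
The plan is to fix a common point $z\in\bigcap_{i=1}^k B(x_i,r_i)$ and, after translating so that $z=0$, extract from the two hypotheses a uniform lower bound on the angles subtended at $z$ by the vectors $x_i$; a packing argument on the unit sphere then bounds $k$ in terms of $n$ alone.

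First I would record the two inequalities available at $z=0$. Since $z\in B(x_i,r_i)$ one has $|x_i|\le r_i$ for every $i$, and since $x_i\notin B(x_j,r_j)$ for $j\ne i$ one has $|x_i-x_j|>r_j$. These already force $x_i\ne z$ for all $i$ once $k\ge 2$: if some $x_i=z=0$, then for any $j\ne i$ one would get $|x_j|=|x_i-x_j|>r_j\ge |x_j|$, a contradiction. Hence all the vectors $x_i$ are nonzero and their directions $u_i:=x_i/|x_i|\in S^{n-1}$ are well defined (the case $k=1$ being trivial).

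The key step is the angle estimate. Fix $i\ne j$ and relabel so that $|x_i|\le|x_j|$. Combining $|x_i-x_j|>r_j$ with $r_j\ge|x_j|$ gives $|x_i-x_j|^2>|x_j|^2$. Writing $\gamma$ for the angle between $x_i$ and $x_j$ at the origin and expanding by the law of cosines, $|x_i-x_j|^2=|x_i|^2+|x_j|^2-2|x_i||x_j|\cos\gamma$, this inequality simplifies to $|x_i|^2>2|x_i||x_j|\cos\gamma$; dividing by $|x_i|>0$ and using $|x_i|\le|x_j|$ yields
$$\cos\gamma<\frac{|x_i|}{2|x_j|}\le\frac12.$$
Thus any two of the vectors $x_i$ subtend an angle $\gamma>\pi/3$ at $z$; equivalently, the directions $u_1,\dots,u_k$ are pairwise at spherical distance strictly greater than $\pi/3$ on $S^{n-1}$.

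It remains to convert this separation into a bound on $k$. The geodesic caps of angular radius $\pi/6$ centered at the $u_i$ are pairwise disjoint, since two overlapping such caps would place their centers within spherical distance $\pi/3$, contradicting the estimate. Each cap has the same positive $(n-1)$-dimensional measure on the compact sphere $S^{n-1}$ by rotational symmetry, so $k$ is at most the ratio of the total measure of $S^{n-1}$ to the measure of one cap, a finite quantity $K=K(n)$ depending only on $n$. This $K$ works uniformly, proving the claim. The real content is the elementary law-of-cosines computation giving $\cos\gamma<1/2$; the only delicate points are the trivial case $k=1$ and the possibility that $z$ coincides with a center, both disposed of in the second paragraph.
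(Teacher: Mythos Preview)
Your proof is correct and follows essentially the same approach as the paper: translate the common point to the origin, derive from $\|x_i\|\le r_i$ and $\|x_i-x_j\|>r_j$ the angular separation $\gamma>\pi/3$ between any two of the $x_i$, and conclude by a packing/compactness argument on $S^{n-1}$. The paper's version is terser (it cites \cite[Lemma~2.5]{mattila} for the $60^\circ$ estimate and invokes compactness of the sphere), whereas you spell out the law-of-cosines computation, the degenerate case $x_i=z$, and the explicit cap-packing bound; but the underlying argument is the same.
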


\begin{proof}
Without loss of generality, we may assume that $x_i\not = 0$ for $i=1,\dots,k$, and $0\in \cap_{i=1}^k B(x_i,r_i)$. Then $\|x_i\| \leq r_i < \|x_i - x_j\|$ for $i\not= j$ where $\|\cdot\|$ denotes the Euclidean norm. It follows from elementary geometric arguments that the angle between $x_i$ and $x_j$ for $i\not= j$ is at least $60^o$, that is, $$\|\frac{x_i}{\|x_i\|} - \frac{x_j}{\|x_j\|} \| \geq 1 \;\text{ for } i\not= j,$$ see~\cite[Lemma~2.5]{mattila} for more details. Then the conclusion follows by compactness of the unit Euclidean sphere.
\end{proof}

The next theorem, usually known as Besicovitch's covering theorem for the Euclidean space in the literature, comes as a consequence of Lemma~\ref{lem:WBCP-Rn}.

\begin{theorem} \label{thm:strong-bcp-Rn} Let $\R^n$ be equipped with the Euclidean distance. There are integers $N\geq 1$ and $Q\geq 1$ with the following properties. Let $A$ be a bounded subset of $\R^n$ and $\bb$ be a family of balls such that each point of $A$ is the center of some ball of $\bb$.
\begin{itemize}
\item[$(i)$] (Besicovitch covering property) There is a countable subfamily $\f\subset \bb$ such that the balls in $\f$ cover $A$ and every point in $\R^n$ belongs to at most $N$ balls in $\f$. 

\smallskip

\item[$(ii)$] (strong Besicovitch covering property) There are countable subfamilies $\bb_1,\dots,\bb_Q \subset \bb$ covering $A$ such that each $\bb_i$ is disjointed, namely,
$$A \subset \bigcup_{i=1}^Q \bigcup_{B\in \bb_i} B$$ and $B\cap B' = \emptyset \;\;\text{for } B, B' \in \bb_i \; \text{with } B\not= B'.$
\end{itemize}
\end{theorem}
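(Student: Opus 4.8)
The plan is to derive both covering properties from the weak Besicovitch covering property of Lemma~\ref{lem:WBCP-Rn} via a single greedy selection procedure, and then to show that the multiplicity bound $N$ from part~$(i)$ can be split into $Q$ disjointed subfamilies for part~$(ii)$. First I would construct the subfamily by a transfinite (or, since $A$ is bounded and balls cover it with centers in $A$, essentially countable) greedy algorithm: at each stage, among the centers in $A$ not yet covered by previously selected balls, choose a ball whose radius is comparable to the supremum of available radii, say at least half of it. Because $A$ is bounded and every point of $A$ centers a ball of $\bb$, the selected radii must tend to $0$ (otherwise infinitely many disjoint balls of comparable size would fit in a bounded region, contradicting finiteness of Lebesgue measure), and a standard argument shows the selected balls cover all of $A$: any uncovered center would center a ball eligible for selection at every stage, which is impossible once radii shrink below it.

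The main work is the multiplicity bound, and this is where Lemma~\ref{lem:WBCP-Rn} enters. Fix a point $z\in\R^n$ and suppose $z$ lies in $k$ of the selected balls $B(x_1,r_1),\dots,B(x_k,r_k)$, ordered by decreasing radius according to their selection time. The greedy rule forces two separation facts: for $i\neq j$ the centers satisfy $x_i\notin B(x_j,r_j)$, because at the moment the later of the two was chosen its center was not yet covered by the earlier ball; and the radii are comparable along the selection (a later-chosen ball has radius at least half that of any earlier eligible ball whose center it could have been). The hypotheses of Lemma~\ref{lem:WBCP-Rn} are then met with the common point $z\in\bigcap_i B(x_i,r_i)$ and the non-inclusion $x_i\notin B(x_j,r_j)$, so $k\leq K$. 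Taking $N:=K$ proves~$(i)$; the countability of $\f$ follows as in the proof of Theorem~\ref{thm:Vitali-doubling-meas}, since the selected balls are essentially disjoint up to bounded multiplicity inside a bounded region and each carries positive Lebesgue measure.

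For part~$(ii)$ the plan is to color the balls of the family $\f$ from part~$(i)$ using the bounded overlap. I would process the balls of $\f$ in order of decreasing radius and assign to each ball the smallest color index in $\{1,\dots,Q\}$ not already used by a previously colored ball meeting it, for a suitable $Q$ depending only on $N$ (and hence only on $n$). The key geometric input is again a Besicovitch-type bound: a ball $B$ of $\f$ can meet only boundedly many previously colored balls of radius at least that of $B$, since all such balls contain a point near $B$ while their centers remain mutually separated, so Lemma~\ref{lem:WBCP-Rn} caps their number. Choosing $Q$ larger than this cap guarantees an available color at each step, and balls of the same color are pairwise disjoint because whenever two balls meet, the later-processed one was forbidden from copying the earlier one's color. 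The resulting partition $\bb_1,\dots,\bb_Q$ of $\f$ still covers $A$, giving~$(ii)$.

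The hard part will be making the greedy selection and the comparability-of-radii bookkeeping precise enough to verify the exact hypotheses of Lemma~\ref{lem:WBCP-Rn}, namely that at the common point $z$ we genuinely have both $x_i\notin B(x_j,r_j)$ for all $i\neq j$ and the intersection nonempty; the inclusion $z\in B(x_i,r_i)$ is immediate, but one must argue carefully that the non-inclusion of centers survives for \emph{all} pairs and not merely for consecutively selected balls. I expect the radius-comparability and the passage from bounded multiplicity to the fixed coloring number $Q$ to be routine once the selection is set up, so the delicate step is the clean extraction of the Lemma~\ref{lem:WBCP-Rn} configuration from the greedy construction.
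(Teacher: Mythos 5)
Your overall strategy---greedy selection, a multiplicity bound via Lemma~\ref{lem:WBCP-Rn}, then a coloring step to pass from $(i)$ to $(ii)$---is the standard route, and it is essentially the proof of \cite[Theorem 2.7]{mattila} to which the paper refers. However, there is a genuine gap at the central step, precisely the point you yourself flag as ``delicate'' at the end. The greedy rule does \emph{not} force the symmetric separation $x_i\notin B(x_j,r_j)$ for all $i\neq j$: it only gives the one-directional statement that the center of a \emph{later} ball lies outside every \emph{earlier} ball. The converse can fail, because a later ball may legitimately have a \emph{larger} radius than an earlier one: the greedy rule only guarantees $r_j< 2 r_i$ (with your factor $1/2$) for $i<j$, since the supremum of available radii need not be attained. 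Concretely, in $\R$ take $A=\{0,1\}$ and $\bb=\{B(0,0.99),\,B(1,1.3)\}$: the greedy rule may select $B(0,0.99)$ first (its radius exceeds half the supremum $1.3$), then must select $B(1,1.3)$, and now the earlier center $0$ lies in the later ball, while both balls contain the point $1/2$. So the selected balls through a common point need not form a Besicovitch family, the hypotheses of Lemma~\ref{lem:WBCP-Rn} are not met, and the bound $k\leq K$ cannot be invoked as you do.

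This is not a bookkeeping issue but a missing idea. The pairs $i<j$ with $x_i\in B(x_j,r_j)$ (which then satisfy $r_i<|x_i-x_j|\leq r_j<2r_i$, hence comparable radii and nearby centers) must be counted by a separate volume/packing argument---for instance using that the shrunk balls $B(x_\ell,r_\ell/3)$ are pairwise disjoint---and only the remaining, genuinely angle-separated pairs are handled by the estimate behind Lemma~\ref{lem:WBCP-Rn}. This two-part counting is the real content of Besicovitch's theorem beyond WBCP, and it is where the Euclidean (doubling) structure enters; it cannot be dispensed with, since in general metric spaces WBCP does \emph{not} imply BCP (see the discussion around Proposition~\ref{prop:doubling-bcp-wbcp}, and note that the ``weak form of BCP'' stated in Section~\ref{sect:wbcp} restricts to families whose radii attain only isolated values precisely because then the greedy selection can be made with non-increasing radii, in which case your symmetric separation \emph{does} hold). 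The same gap recurs in your part $(ii)$: previously colored balls of radius at least $r_B$ that meet $B$ need not share a single common point, so Lemma~\ref{lem:WBCP-Rn} does not cap their number; one needs instead the multiplicity bound from $(i)$ combined with a packing estimate (and ``processing in order of decreasing radius'' may be ill-defined for an infinite family---the selection order is the correct order to use).
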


For a proof of Theorem~\ref{thm:strong-bcp-Rn} we refer for instance to~\cite[Theorem 2.7]{mattila}. See also Section~\ref{sect:finite-dim-metrics} for the connections between Lemma~\ref{lem:WBCP-Rn} and Theorem~\ref{thm:strong-bcp-Rn}~$(i)$ in the general metric setting. 

\medskip

We explain now how the fact that every Radon measure over $\R^n$ is of Vitali type with respect to the Euclidean distance can be obtained as a rather simple consequence of the strong Besicovitch covering property given by Theorem~\ref{thm:strong-bcp-Rn}~$(ii)$.

\begin{theorem}  \label{thm:Vitali-Radon-meas-Rn}
Radon measures over $\R^n$  are of Vitali type with respect to the Euclidean distance. 
\end{theorem}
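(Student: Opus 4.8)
The plan is to derive the Vitali type property from the strong Besicovitch covering property in Theorem~\ref{thm:strong-bcp-Rn}~$(ii)$, closely following the structure of the proof of Theorem~\ref{thm:Vitali-doubling-meas}. Given a Radon measure $\lambda$, a set $A\subset\R^n$, and a family $\bb$ of balls such that each point of $A$ is the center of arbitrarily small balls of $\bb$, I first reduce to the case where $A$ is bounded. This reduction is exactly the one used at the end of the proof of Theorem~\ref{thm:Vitali-doubling-meas}: fix a basepoint $x$ and an increasing sequence $R_k\to\infty$ with $\lambda(\{y:d(x,y)=R_k\})=0$, which is possible because the spheres of positive $\lambda$-measure are at most countable; partition $X$ into the annuli $U_k$, apply the bounded case to each $A\cap U_k$ using only balls of $\bb$ contained in $U_k$ (legitimate since each center admits arbitrarily small balls), and take the union of the resulting disjointed families, which remains disjointed because the $U_k$ are disjoint open sets.

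For the bounded case, the key point is a one-step reduction to a single disjointed subfamily whose balls leave out a set of small measure, which can then be iterated. First I would apply Theorem~\ref{thm:strong-bcp-Rn}~$(ii)$ to $A$ and $\bb$ to obtain countable disjointed subfamilies $\bb_1,\dots,\bb_Q$ with $A\subset\bigcup_{i=1}^Q\bigcup_{B\in\bb_i}B$. Since $A$ is bounded, $\lambda(A)<\infty$ (Radon measures are finite on bounded sets), and by subadditivity $\sum_{i=1}^Q\lambda(\bigcup_{B\in\bb_i}B)\geq\lambda(A)$. Hence at least one index $i_0$ satisfies $\lambda(\bigcup_{B\in\bb_{i_0}}B)\geq\lambda(A)/Q$, and since this family is disjointed it covers a fixed fraction of $A$: setting $\f_1:=\bb_{i_0}$ we get $\lambda(A\setminus\bigcup_{B\in\f_1}B)\leq(1-1/Q)\lambda(A)$.

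The main obstacle, and the place where care is needed, is the iteration. I would apply the above step to the \emph{residual} set $A_1:=A\setminus\bigcup_{B\in\f_1}B$, but crucially restricting the family to balls of $\bb$ that are disjoint from the already-chosen balls $\bigcup_{B\in\f_1}B$. Such balls still provide arbitrarily small balls at each point of $A_1$: indeed $\bigcup_{B\in\f_1}B$ is closed (a countable union of closed balls need not be closed, so here one uses that each point of $A_1$ lies outside the closed set $\bigcup_{B\in\f_1}B$ only after noting this union is contained in a fixed bounded region; more safely, since each ball in $\f_1$ is closed and a point $a\in A_1$ is not in any of them, a sufficiently small ball $B(a,r)\in\bb$ avoids the finitely many balls of $\f_1$ meeting a fixed neighborhood of $a$). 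This yields a disjointed $\f_2$ with $\lambda(A_1\setminus\bigcup_{\f_2}B)\leq(1-1/Q)\lambda(A_1)$, and $\f_1\cup\f_2$ is disjointed. Iterating gives disjointed countable families with $\lambda(A\setminus\bigcup_{j=1}^m\bigcup_{\f_j}B)\leq(1-1/Q)^m\lambda(A)\to0$, so $\f:=\bigcup_j\f_j$ is a countable disjointed subfamily of $\bb$ with $\lambda(A\setminus\bigcup_{B\in\f}B)=0$, as required. The delicate points to verify are the disjointedness being preserved across stages (immediate, since at each stage we only select balls disjoint from all previously selected ones) and the legitimacy of restricting to such balls while retaining the arbitrarily-small-balls hypothesis at every remaining point.
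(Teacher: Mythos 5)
Your overall strategy is the same as the paper's: apply the strong Besicovitch covering property of Theorem~\ref{thm:strong-bcp-Rn}~$(ii)$, select one disjointed subfamily capturing a fixed fraction of the measure, iterate on the residual set to get geometric decay, and reduce the unbounded case to the bounded one (your annuli with $\lambda$-null spheres work just as well as the paper's cubes). However, the iteration step has a genuine gap. You keep the \emph{entire} countable family $\f_1=\bb_{i_0}$ and then claim that every point $a$ of the residual set $A_1$ still admits arbitrarily small balls of $\bb$ disjoint from $\bigcup_{B\in\f_1}B$, on the grounds that only finitely many balls of $\f_1$ can meet a fixed neighborhood of $a$. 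That claim is false: disjoint closed balls may accumulate at a point. For instance, in $\R^n$ the balls $B\left((2^{-k},0,\dots,0),8^{-k}\right)$, $k\geq 1$, are pairwise disjoint, yet every ball centered at the origin contains infinitely many of them; a point of $A_1$ situated at such an accumulation point has \emph{no} admissible ball at all, and the iteration cannot proceed. Nothing in your selection prevents the chosen balls from clustering on a subset of $A_1$ of positive measure. The paper avoids exactly this by truncating: from the chosen countable disjointed family it extracts \emph{finitely many} balls $B_1,\dots,B_{k_1}$ whose union captures at least half the measure of the full union (continuity from below), at the cost of replacing $Q$ by $2Q$. The complement of finitely many closed balls is open, $A_1$ lies inside it, and the next open envelope $U_1$ is chosen inside this complement, so the balls selected at the next stage are automatically disjoint from the earlier ones and the arbitrarily-small-balls hypothesis survives.

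There is also a smaller inaccuracy at the start of your bounded case: from $\lambda\left(\bigcup_{B\in\bb_{i_0}}B\right)\geq\lambda(A)/Q$ you cannot conclude $\lambda\left(A\setminus\bigcup_{B\in\bb_{i_0}}B\right)\leq(1-1/Q)\lambda(A)$, because the balls may carry most of their mass outside $A$; disjointedness of the family is irrelevant to this point. Either work with $\lambda\left(A\cap\bigcup_{B\in\bb_i}B\right)$, choose $i_0$ so that this quantity is at least $\lambda(A)/Q$, and use that the Borel union is $\lambda$-measurable to split $\lambda(A)$; or do as the paper does: by outer regularity enclose $A$ in an open set $U$ with $\lambda(U)\leq(1+(4Q)^{-1})\lambda(A)$, apply Theorem~\ref{thm:strong-bcp-Rn}~$(ii)$ only to the balls of $\bb$ contained in $U$, and estimate $\lambda(A_1)\leq\lambda(U)-\lambda\left(\bigcup_j B_j\right)$. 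The paper's open envelope does double duty, controlling the measure at the current stage and, combined with the finite truncation, guaranteeing disjointness across stages, which is why its iteration closes cleanly with ratio $u=1-(4Q)^{-1}<1$.
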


As a consequence of Theorem~\ref{thm:Vitali-Radon-meas-Rn} together with the results of Section~\ref{sect:Vitali-type-meas}, we get the following corollary.

\begin{corollary}
The conclusions of Theorem~\ref{thm:diff-vitali-meas} and Corollary~\ref{cor:diff-thm-from-vitali-type} hold true for every Radon measure over the Euclidean space.
\end{corollary}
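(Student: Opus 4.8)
The plan is to reduce everything to the machinery already assembled in Section~\ref{sect:Vitali-type-meas}, using Theorem~\ref{thm:Vitali-Radon-meas-Rn} as the bridge. First I would observe that $\R^n$ equipped with the Euclidean distance is a separable metric space, so the separability hypothesis required by Theorem~\ref{thm:diff-vitali-meas} and Corollary~\ref{cor:diff-thm-from-vitali-type} is automatically satisfied. Next I would recall that, over the Euclidean space, a Radon measure is precisely a locally finite Borel regular measure, as noted at the beginning of Section~\ref{sect:Eulidean-space}; hence the class of Radon measures over $\R^n$ coincides with the class $M$ introduced earlier, and in particular every Radon measure lies in $M$.

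With these two observations in hand, let $\lambda$ be an arbitrary Radon measure over $\R^n$. By Theorem~\ref{thm:Vitali-Radon-meas-Rn}, $\lambda$ is of Vitali type with respect to the Euclidean distance. Since $\R^n$ is separable and $\lambda\in M$ is of Vitali type, the hypotheses of Theorem~\ref{thm:diff-vitali-meas} are met, and I would invoke it directly: for every $\mu\in M$, that is, for every Radon measure $\mu$ over $\R^n$, the conclusions~\eqref{e:diff-vitali-meas_1},~\eqref{e:diff-vitali-meas_2} and~\eqref{e:diff-vitali-meas_3} hold. Likewise, Corollary~\ref{cor:diff-thm-from-vitali-type} applies verbatim to $\lambda$, yielding the validity of the differentiation theorem for $\lambda$. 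As $\lambda$ was an arbitrary Radon measure, this is precisely the asserted statement.

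The argument is purely a matter of checking that the hypotheses of the Section~\ref{sect:Vitali-type-meas} results are satisfied, so there is no genuine obstacle. The only point deserving a word of care is the identification of Radon measures over $\R^n$ with elements of $M$, since it is this identification that lets both the ``for every $\mu\in M$'' quantifier in Theorem~\ref{thm:diff-vitali-meas} and the ambient requirement $\lambda\in M$ be read as statements about arbitrary Radon measures. One could alternatively bypass the class $M$ and appeal to the Radon-measure version of these results recorded in Remark~\ref{rk:federer-context}, but the route through separability and $M$ is the most economical given what has already been proved.
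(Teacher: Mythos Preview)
Your proposal is correct and follows exactly the route the paper intends: the corollary is stated in the paper without a separate proof, simply as ``a consequence of Theorem~\ref{thm:Vitali-Radon-meas-Rn} together with the results of Section~\ref{sect:Vitali-type-meas}'', and you have spelled out precisely that deduction. The only addition you make is the explicit identification of Radon measures over $\R^n$ with the class $M$, which the paper itself records at the beginning of Section~\ref{sect:Eulidean-space}.
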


\begin{proof} [Proof of Theorem~\ref{thm:Vitali-Radon-meas-Rn}]
Let $\lambda$ be a Radon measure over $\R^n$. Let $A\subset \R^n$ and let $\bb$ be a family of Euclidean balls such that each point of $A$ is the center of arbitrarily small balls of $\bb$. We may assume that $\lambda(A)>0$. Let us assume first that $A$ is bounded and hence has finite $\lambda$ measure. By outer regularity of $\lambda$, there is an open set $U$ such that $A\subset U$ and 
$$\lambda(U) \leq (1 + (4Q)^{-1})\, \lambda(A)$$
where $Q$ is given by Theorem~\ref{thm:strong-bcp-Rn}. By Theorem~\ref{thm:strong-bcp-Rn}~$(ii)$ we can find countable subfamilies $\bb_1,\dots,\bb_Q \subset \bb$ such that each $\bb_i$ is disjointed and 
$$A\subset \bigcup_{i=1}^Q \bigcup_{B\in \bb_i} B \subset U.$$
Then $$\lambda(A) \leq \sum_{i=1}^Q \lambda \left( \bigcup_{B\in \bb_i} B \right)$$ hence there is an $i$ such that $$\lambda(A) \leq Q\, \lambda \left( \bigcup_{B\in \bb_i} B \right).$$ Further we can find finitely many disjointed balls $B_1,\dots,B_{k_1} \in \bb_i$ such that $$\lambda(A) \leq 2Q\, \lambda \left( \bigcup_{j=1}^{k_1} B_j \right).$$ Letting $$A_1 := A \setminus \bigcup_{j=1}^{k_1} B_j,$$ we get $$\lambda(A_1) \leq \lambda\left(U \setminus \bigcup_{j=1}^{k_1} B_j \right) = \lambda(U) - \lambda\left(\bigcup_{j=1}^{k_1} B_j \right) \leq u \lambda(A)$$
with $u:= 1-(4Q)^{-1} <1$. 
Now $A_1$ is contained in the open set $\R^n \setminus \cup_{j=1}^{k_1} B_j$ and therefore we can find an open set $U_1$ such that $A_1 \subset U_1 \subset \R^n \setminus \cup_{j=1}^{k_1} B_j$ and $$\lambda(U_1) \leq (1 + (4Q)^{-1})\, \lambda(A_1).$$ As before there are finitely many disjointed balls $B_{k_1+1},\dots,B_{k_2}$ for which $B_j \subset U_1$ and $$\lambda(A_2) \leq  u \lambda(A_1) \leq u^2 \lambda(A)$$
where $$A_2 := A_1 \setminus \bigcup_{j=k_1+1}^{k_2} B_j = A \setminus \bigcup_{j=1}^{k_2} B_j.$$
Clearly, the balls $B_1,\dots,B_{k_2}$ are disjointed. After $m$ steps, we get $$\lambda\left(A \setminus \bigcup_{j=1}^{k_m} B_j \right) \leq u^m \lambda(A)$$ 
with the balls $B_1,\dots,B_{k_m}\in \bb$ that are disjointed and the fact that $\lambda$ is of Vitali type with respect to the Euclidean distance follows since $u<1$.

\medskip

To get rid of the assumption that $A$ is bounded, we write $\R^n$ as the union of a countable collection of closed cubes $\overline{Q_i}$ such that the corresponding open cubes $Q_i$ are disjointed and such that $\lambda(\R^n \setminus \cup_{i\geq 1} Q_i) = 0$. This is possible since $\lambda(V)$ can be positive for at most countably many parallel hyperplanes $V$. Applying the above arguments to the sets $A\cap Q_i$ and noting that $\lambda(A \setminus \cup_{i \geq 1} Q_i) = 0$, we get that $\lambda$ is of Vitali type with respect to the Euclidean distance. 
\end{proof}

\begin{remark} The conclusions of Lemma~\ref{lem:WBCP-Rn} and Theorem~\ref{thm:strong-bcp-Rn} hold for families of open balls. However, Theorem~\ref{thm:Vitali-Radon-meas-Rn} does not hold in its full generality when considering family of open balls in the definition of Vitali type measures, see~\cite[Example~2.20]{AFP}.
\end{remark}

\begin{remark} \label{rk:directionally-limited-metrics} More general metric spaces satisfy the strong Besicovitch covering property, see Theorem~\ref{thm:strong-bcp-Rn}~$(ii)$ for the statement in the Euclidean case. We refer for instance to~\cite[2.8.9]{federer} for the notion of directionally limited metrics and to~\cite[2.8.14]{federer} for an extension of Theorem~\ref{thm:strong-bcp-Rn}~$(ii)$ for such metrics. Finite dimensional normed vector spaces fit in particular into this more general setting. We shall however not go further in that direction in these notes. In the next sections, we shall instead investigate in more details the geometric content of the Besicovitch and weak Besicovitch covering properties and their connections with the validity of the differentiation theorem for every locally finite Borel regular measure in the metric setting.
\end{remark}

\section{$\sigma$-finite dimensional metrics} \label{sect:finite-dim-metrics}

We present in this section the notion of $\sigma$-finite dimentional metrics introduced by Preiss in~\cite{P}. This notion  characterizes complete separable metric spaces on which the differentiation theorem holds for every locally finite Borel regular measure, see Theorem~\ref{thm:preiss-diff}. 

\medskip

We start with a model case to explain without to much technicalities how one can get from such kind of notions the validity of the differentiation theorem for every locally finite Borel regular measure.

\medskip

\begin{definition} [Besicovitch covering property] \label{def:BCP}
We say that a distance $d$ on a space $X$ satisfies the {\em Besicovitch covering property} (BCP) if there exists an integer $N\geq 1$ such that the following holds. Let $A$ be a bounded subset of $X$ and $\mathcal{B}$ be a family of balls such that each point of $A$ is the center of some ball of $\mathcal{B}$. Then there is a countable subfamily $\mathcal{F}\subset \mathcal{B}$ such that the balls in $\mathcal{F}$ cover $A$ and every point in $X$ belongs to at most $N$ balls in $\mathcal{F}$, namely, 
\begin{equation*}
\chi_A \leq \sum_{B \in \mathcal{F}} \chi_B \leq N
\end{equation*}
where $\chi_A$ denotes the characteristic function of the set $A$.
\end{definition}

\medskip

With this terminology, Theorem~\ref{thm:strong-bcp-Rn}~$(i)$ exactly says that the Euclidean distance satisfies BCP. We show in the next theorem that the validity of BCP is sufficient to imply the validity of the differentiation theorem for every locally finite Borel regular measure over a separable metric space. Recall that, given a metric space $(X,d)$, we denote by $M$ the class of all locally finite Borel regular measures over $(X,d)$.

\begin{theorem} \label{thm:diff-thm-from-bcp}
Let $(X,d)$ be a separable metric space. Assume that $d$ satisfies BCP. Then the differentiation theorem holds for every $\lambda \in M$.
\end{theorem}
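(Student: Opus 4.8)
The plan is to deduce Theorem~\ref{thm:diff-thm-from-bcp} from the machinery already developed in Section~\ref{sect:Vitali-type-meas}, namely Corollary~\ref{cor:diff-thm-from-vitali-type}. By that corollary, it suffices to prove that every $\lambda \in M$ is of Vitali type with respect to $d$; the validity of the differentiation theorem for $\lambda$ then follows immediately. Thus the entire content of the proof is the implication \emph{$d$ satisfies BCP $\Rightarrow$ every $\lambda \in M$ is of Vitali type}. This mirrors exactly the role played by Theorem~\ref{thm:strong-bcp-Rn}~$(ii)$ in the Euclidean case (Theorem~\ref{thm:Vitali-Radon-meas-Rn}), except that here I have at my disposal only the bounded-overlap covering of Definition~\ref{def:BCP} rather than a decomposition into $Q$ disjointed subfamilies.

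First I would fix $\lambda \in M$, a set $A \subset X$, and a family $\bb$ of balls with $\inf\{r>0 : B(x,r)\in\bb\}=0$ for every $x\in A$, and reduce to the case where $A$ is bounded with $\lambda(A)<\infty$. The reduction to bounded $A$ is handled exactly as in the proof of Theorem~\ref{thm:Vitali-doubling-meas}: using $\sigma$-finiteness one covers $X$ by countably many pieces (e.g. annuli $U_k$ whose boundary spheres are $\lambda$-null) of finite measure, runs the bounded argument on each $A\cap U_k$ with balls forced to lie inside $U_k$, and takes the union of the resulting disjointed families. The core step is then, for bounded $A$ with $0<\lambda(A)<\infty$, to extract finitely many \emph{disjointed} balls of $\bb$ covering a fixed fraction of $A$ in measure, and to iterate so that the uncovered portion shrinks geometrically. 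I would run the same iterative scheme as in Theorem~\ref{thm:Vitali-Radon-meas-Rn}: use outer regularity to trap $A$ in an open set $U$ with $\lambda(U)\le (1+\eta)\lambda(A)$ for a small $\eta$ depending on $N$, apply BCP to the subfamily of balls of $\bb$ that are centered on $A$ and contained in $U$ (possible since each point of $A$ has arbitrarily small balls in $\bb$), and from the bounded-overlap cover produced by BCP extract a controlled-measure disjointed subfamily.

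The main obstacle, and the point where this proof genuinely differs from the Euclidean one, is passing from the $N$-fold-overlap covering supplied by BCP to a single \emph{disjointed} finite subfamily capturing a definite proportion of the measure. In the Euclidean setting Theorem~\ref{thm:strong-bcp-Rn}~$(ii)$ hands one $Q$ disjointed subfamilies for free, so one simply picks the heaviest. Here I only know $\chi_A \le \sum_{B\in\f}\chi_B \le N$ for a countable cover $\f$. Integrating this against $\lambda$ gives $\lambda(A) \le \sum_{B\in\f}\lambda(B) \le N\,\lambda\!\left(\bigcup_{B\in\f}B\right) \le N\,\lambda(U)$, so $\sum_{B\in\f}\lambda(B)\ge\lambda(A)$; the surplus mass counted with multiplicity is what must be converted into genuine disjointness. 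The standard device is a greedy selection: order (a finite subcollection of) $\f$ and successively keep a ball only if it is disjoint from all previously kept balls, so that every discarded ball meets a kept one. Because the overlap is at most $N$, each kept ball can be met by a bounded number of others, and a counting argument bounds the total measure of the discarded balls in terms of the measure of the union of the kept ones, yielding a disjointed subfamily whose union has measure at least a fixed fraction $c=c(N)>0$ of $\lambda\!\left(\bigcup_{B\in\f}B\right)\ge\lambda(A)$. I would need to carry out this overlap-to-disjointness counting carefully — truncating $\f$ to a finite subfamily covering all but an arbitrarily small part of $A$ first, so that sums are finite and the greedy procedure terminates.

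Granting such a disjointed finite family $B_1,\dots,B_{k_1}$ with $\lambda\!\left(\bigcup_{j} B_j\right) \ge c\,\lambda(A)$, the iteration closes exactly as in Theorem~\ref{thm:Vitali-Radon-meas-Rn}: setting $A_1 := A\setminus\bigcup_{j=1}^{k_1}B_j$ one obtains $\lambda(A_1)\le \lambda(U)-\lambda\!\left(\bigcup_j B_j\right)\le u\,\lambda(A)$ for some constant $u=u(N,\eta)<1$, then one applies the same construction to $A_1$ inside the open set $X\setminus\bigcup_{j\le k_1}B_j$ (which still contains arbitrarily small balls of $\bb$ centered at each point of $A_1$), producing further disjointed balls $B_{k_1+1},\dots,B_{k_2}$ with $\lambda(A_2)\le u^2\lambda(A)$, and so on. After $m$ steps $\lambda\!\left(A\setminus\bigcup_{j=1}^{k_m}B_j\right)\le u^m\lambda(A)\to 0$, and since all the selected balls are mutually disjoint (each new batch is drawn from the complement of all previous balls), the countable family $\f:=\{B_1,B_2,\dots\}$ is disjointed and covers $\lambda$-almost all of $A$. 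This establishes that $\lambda$ is of Vitali type, and Corollary~\ref{cor:diff-thm-from-vitali-type} completes the proof.
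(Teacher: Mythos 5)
Your reduction founders at exactly the step you yourself flag as the main obstacle, and the gap is not repairable by the counting argument you sketch. The assertion that ``because the overlap is at most $N$, each kept ball can be met by a bounded number of others'' is false: a bound $\sum_{B\in\f}\chi_B\leq N$ limits how many balls of $\f$ contain a given \emph{point}, not how many balls of $\f$ meet a given \emph{ball}, and the measures of discarded balls are in no way controlled by the measures of the kept balls they touch. Concretely, let $X=\{c_1,\dots,c_n\}\cup\{p_{ij}: 1\leq i<j\leq n\}$ with $d(c_i,p_{ij})=d(c_j,p_{ij})=1$ and all other distances between distinct points equal to $2$ (any symmetric function with values in $\{1,2\}$ is a distance). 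Then $B_i:=B(c_i,1)=\{c_i\}\cup\{p_{ij}:j\neq i\}$, so $B_i\cap B_j=\{p_{ij}\}$: the $n$ balls are \emph{pairwise} intersecting while every point of $X$ lies in at most $2$ of them. Give each $c_i$ measure $1/n$ and each $p_{ij}$ measure $0$. The family $\{B_1,\dots,B_n\}$ covers $A=\{c_1,\dots,c_n\}$ with overlap $N=2$ and $\lambda\left(\bigcup_i B_i\right)=1$, yet every disjointed subfamily consists of a single ball and captures only measure $1/n$; the greedy selection keeps exactly one ball whatever the ordering. So no constant $c(N)$ exists, your geometric iteration never gets started, and note also that here $\sum_i\lambda(B_i)=\lambda\left(\bigcup_i B_i\right)$, so there is no ``surplus mass counted with multiplicity'' to convert into disjointness. (This particular space fails BCP, but your extraction step invokes nothing about the space beyond the overlap bound of the family that BCP outputs, so nothing in your argument excludes it; whether BCP actually implies that every $\lambda\in M$ is of Vitali type is a strictly stronger statement that the paper never claims --- in the Euclidean case the Vitali property is derived from the \emph{strong} Besicovitch property, Theorem~\ref{thm:strong-bcp-Rn}~$(ii)$, i.e.\ from the decomposition into $Q$ disjointed subfamilies, not from bounded overlap.)

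The paper's own proof avoids this issue entirely, and this is precisely why it takes a different route: it verifies the hypothesis of Theorem~\ref{thm:diff-thm-from-max-op} with $C=1$ and $N$ the BCP constant, and that theorem runs a weak type $(1,1)$ estimate for the maximal operator in which the overlap bound is used \emph{directly}, via
\begin{equation*}
\sum_{j\geq 1}\int_{B_j}|f(y)|\,d\lambda y=\int_X\sum_{j\geq 1}\chi_{B_j}(y)\,|f(y)|\,d\lambda y\leq N\,\|f\|_1,
\end{equation*}
so that disjointness is never needed; the differentiation theorem then follows by approximating $f$ in $L^1(\lambda)$ by continuous functions. If you want to keep your Vitali-type strategy (which, if its key lemma held, would indeed give the stronger conclusions of Theorem~\ref{thm:diff-vitali-meas} via Corollary~\ref{cor:diff-thm-from-vitali-type}), you need a genuinely new idea for extracting a measure-heavy disjointed subfamily from an $N$-overlap cover --- a counting or greedy argument cannot do it. The efficient fix is to abandon that step and argue through the maximal function as the paper does.
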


\begin{proof}
Every $\lambda \in M$ satisfies the assumption of Theorem~\ref{thm:diff-thm-from-max-op} below with $C=1$ and $N$ given by the validity of BCP for $d$.
\end{proof}

\begin{theorem} \label{thm:diff-thm-from-max-op}
Let $(X,d)$ be a separable metric space and let $\lambda \in M$. Assume there exist constants $C>0$ and  $N>0$ such that the following holds. For every bounded set $A\subset X$ and every family $\bb$ of balls in  $(X,d)$ such that each point of $A$ is the center of some ball of $\bb$, there is a countable subfamily $\f \subset \bb$ such that 
\begin{equation*} \label{e:diff-thm-from-max-op}
 \lambda (A) \leq C \lambda \left(\bigcup_{B\in \f} B\right) \;\text{ and }\; \sum_{B\in \f} \chi_B \leq N.
\end{equation*}
Then the differentiation theorem holds for $\lambda$.
\end{theorem}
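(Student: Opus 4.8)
The plan is to prove the differentiation theorem by controlling a suitable maximal operator and then using the standard density argument. The covering hypothesis is precisely what is needed to obtain a weak-type bound for the maximal function, and from such a bound the almost-everywhere convergence follows on a dense class (continuous functions, or bounded functions supported on bounded sets) and then extends to all of $L^1_{loc}(\lambda)$.

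First I would reduce the problem. Since the conclusion \eqref{e:lebesgue-diff} is local and $\lambda$ is locally finite over a separable space, it suffices to prove convergence $\lambda$-almost everywhere on each member of a countable open cover of $X$ by sets of finite $\lambda$-measure; hence I may assume $f\in L^1(\lambda)$ and that we work inside a fixed bounded (or finite-measure) region. As usual it is enough to treat nonnegative $f$. The key object is the (restricted, uncentered or centered) maximal operator
\begin{equation*}
M_\lambda f(x) := \sup_{r>0} \frac{1}{\lambda(B(x,r))} \int_{B(x,r)} |f(y)|\, d\lambda y.
\end{equation*}
I would prove the weak-type $(1,1)$ inequality $\lambda(\{x : M_\lambda f(x) > t\}) \leq (CN/t)\,\|f\|_{L^1(\lambda)}$ for every $t>0$.

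The weak-type bound is where the covering hypothesis enters, and this is the main obstacle. Fix $t>0$ and let $A_t := \{x : M_\lambda f(x) > t\}$, which I first intersect with a bounded set so as to apply the hypothesis. For each $x\in A_t$ there is a ball $B(x,r_x)$ with $\int_{B(x,r_x)} |f|\, d\lambda > t\,\lambda(B(x,r_x))$; these balls are centered at points of a bounded set, so the hypothesis yields a countable subfamily $\f$ with $\lambda(A_t) \leq C\,\lambda(\bigcup_{B\in\f} B)$ and $\sum_{B\in\f}\chi_B \leq N$. The bounded-overlap condition converts the sum of the individual integral estimates into a single integral, giving
\begin{equation*}
\lambda(A_t) \leq C\,\lambda\Bigl(\bigcup_{B\in\f} B\Bigr) \leq C \sum_{B\in\f} \lambda(B) \leq \frac{C}{t} \sum_{B\in\f} \int_B |f|\, d\lambda \leq \frac{CN}{t} \int_X |f|\, d\lambda.
\end{equation*}
A standard exhaustion over an increasing sequence of bounded sets removes the boundedness restriction on $A_t$.

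Finally I would run the classical density argument. For bounded $f$ supported in the region that are, say, restrictions of Lipschitz or continuous functions (which are dense in $L^1(\lambda)$ by outer regularity of $\lambda$ noted after Theorem~\ref{thm:decomposition-meas}), the averages converge to $f(x)$ at every point where $f$ is continuous, hence everywhere for such $f$. For general $f\in L^1(\lambda)$, writing the oscillation
\begin{equation*}
\Omega f(x) := \limsup_{r\downarrow 0} \frac{1}{\lambda(B(x,r))}\int_{B(x,r)} |f(y)-f(x)|\, d\lambda y,
\end{equation*}
I approximate $f$ by a continuous $g$ with $\|f-g\|_{L^1(\lambda)}$ small and estimate $\Omega f \leq M_\lambda(f-g) + |f-g|$ pointwise. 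The weak-type bound controls $\lambda(\{M_\lambda(f-g)>t\})$ and Chebyshev controls $\lambda(\{|f-g|>t\})$, so $\lambda(\{\Omega f > 2t\}) \leq (CN+1)t^{-1}\|f-g\|_{L^1(\lambda)}$; letting the approximation improve forces $\lambda(\{\Omega f>2t\})=0$ for every $t>0$, whence $\Omega f = 0$ $\lambda$-a.e. This gives \eqref{e:lebesgue-diff} and completes the proof.
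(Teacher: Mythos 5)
Your proposal is correct and follows essentially the same route as the paper's proof: reduction to $f\in L^1(\lambda)$ by exhaustion, a weak type $(1,1)$ inequality for the centered maximal operator obtained from the covering hypothesis exactly as you describe, and the classical density argument with continuous functions together with Chebyshev's inequality. The only cosmetic difference is that you define the oscillation $\Omega f$ via averages of $|f(y)-f(x)|$ rather than via $|f_r(x)-f(x)|$ as the paper does; both versions run the same way (and in either case one should note, as the paper does, that for continuous $g$ the oscillation vanishes on the support of $\lambda$, hence $\lambda$-almost everywhere).
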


\begin{proof}
The proof given here is based on a weak type $(1,1)$ inequality for the maximal operator. We refer for instance to~\cite{deGuzman} or~\cite{stein} for more specialized results in that direction. First, it is sufficient to prove~\eqref{e:lebesgue-diff} for $f\in L^1(\lambda)$ multiplying our original function by the characteristic function of an open set where $f$ is $\lambda$-integrable and then exhausting $X$ by a countable union of such open sets. 

\medskip

Next, for $f\in L^1(\lambda)$ and $x\in X$, set $$Mf(x) := \sup_{r>0} \frac{1}{\lambda(B(x,r))} \int_{B(x,r)} |f(y)| \,d\lambda y.$$
Let $A$ be a bounded subset of $X$. For $\alpha >0$, set $$A_\alpha := \{x\in A : Mf(x) > \alpha\}.$$
For each $x\in A_\alpha$, we can find a ball $B(x,r_x)$ such that $$\lambda(B(x,r_x)) < \frac{1}{\alpha}\int_{B(x,r_x)} |f(y)| \,d\lambda y.$$ By assumption, one can extract from this family of balls a sequence $B_1,B_2,\dots$ such that 
\begin{equation*}
\begin{split}
\lambda(A_\alpha) &\leq  C \lambda  \left(\bigcup_{j\geq 1} B_j\right) \leq C \sum_{j\geq 1} \lambda(B_j)\\ &\leq \frac{C}{\alpha} \sum_{j\geq 1} \int_{B_j} |f(y)| \,d\lambda y = \frac{C}{\alpha}  \int_X \sum_{j\geq 1} \chi_{B_j}(y) |f(y)| \,d\lambda y \leq \frac{CN}{\alpha} \,\|f\|_1.
\end{split}
\end{equation*}
Since this holds for every bounded set $A$, we get the following weak type (1,1) inequality for the maximal operator,
\begin{equation} \label{e:weak-1,1-max-op}
\lambda (\{x\in X : Mf(x) > \alpha\}) \leq \frac{CN}{\alpha} \,\|f\|_1.
\end{equation}

\medskip

For $g\in L^1(\lambda)$ and $r>0$, denote by $g_r$ the function defined by $$g_r(x): = \frac{1}{\lambda(B(x,r))}\int_{B(x,r)} g(y) \,d\lambda y$$
and set $$\Omega g(x): = \limsup_{r\rightarrow 0} |g_r(x) - g(x)|.$$ 
We have $\Omega g(x) \leq Mg(x) + g(x)$. Therefore, using~\eqref{e:weak-1,1-max-op} and Chebychev's inequality, we get 
\begin{equation*}
\begin{split}
\lambda (\{x\in X& : \Omega g(x)>\alpha\}) \\
&\leq \lambda \left(\left\{x\in X : Mg(x) > \frac{\alpha}{2} \right\}\right) + \lambda \left(\left\{x\in X : |g(x)| > \frac{\alpha}{2} \right\}\right) \\
&\leq \frac{C'}{\alpha} \,\|g\|_1
\end{split}
\end{equation*}
for every $\alpha >0$ and for some constant $C'>0$.

\medskip

To conclude the proof of~\eqref{e:lebesgue-diff}, let $f\in L^1(\lambda)$. Let $h\in L^1(\lambda)$ and assume that $h$ is continuous. Then $\Omega h$ is identically zero on the support of $\lambda$ and  hence $\lambda$-a.e. Next, we have $\Omega f \leq \Omega (f-h) + \Omega h = \Omega (f-h)$, therefore
$$\lambda (\{x\in X : \Omega f (x) >\alpha\}) \leq \lambda (\{x\in X : \Omega (f-h) (x) >\alpha\}) \leq \frac{C'}{\alpha} \,\|f-h\|_1.$$
By density of continuous functions in $L^1(\lambda)$, we get 
$$\lambda (\{x\in X : \Omega f (x) >\alpha\}) = 0$$ for every $\alpha >0$. Hence $\Omega f(x) =0$ for $\lambda$-a.e.~$x\in X$ which proves~\eqref{e:lebesgue-diff}.
\end{proof}

\medskip

\begin{remark} Note that every doubling measure satisfies the assumption of Theorem~\ref{thm:diff-thm-from-max-op}. This can be deduced from Theorem~\ref{thm:basic-5r-covering} and is left to the reader as an easy exercise. Hence one can also recover from Theorem~\ref{thm:diff-thm-from-max-op} the validity of the differentiation theorem for doubling measures over separable metric spaces.
\end{remark}

\medskip

More generally, results about derivatives of measures similar to those presented in Section~\ref{sect:Vitali-type-meas} can be obtained with the assumption of Theorem~\ref{thm:diff-thm-from-max-op}. Namely,

\begin{theorem}
Let $(X,d)$ be a separable metric space. Let $\lambda \in M$ and assume that the assumption of Theorem~\ref{thm:diff-thm-from-max-op} holds. Then for every $\mu \in M$, we have
\begin{equation} \label{e:diff-abs-cont-part}
\mu_\lambda(A) = \int_A D(\mu,\lambda,x)\,d\lambda x \;\; \text{for every } \lambda \text{-measurable set } A,
\end{equation}
where $\mu_\lambda$ denotes the absolute continuous part of the Lebesgue decomposition of $\mu$ relative to $\lambda$.
\end{theorem}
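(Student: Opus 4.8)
The plan is to avoid transcribing the proof of the sharp identity \eqref{e:diff-vitali-meas_3} (which relied on the Vitali type hypothesis through Lemma~\ref{lem:upper-bound-lower-density-vs-measure}) and instead reduce everything to the Lebesgue decomposition of $\mu$ relative to $\lambda$ combined with the differentiation theorem for $\lambda$, which is already available from Theorem~\ref{thm:diff-thm-from-max-op} under the present hypothesis. By Theorem~\ref{thm:decomposition-meas} I would write $\mu = \mu_\lambda + \mu_s$ with $\mu_\lambda,\mu_s\in M$, $\mu_\lambda\ll\lambda$ and $\mu_s\perp\lambda$. Since $\mu_\lambda\ll\lambda$ and both measures are $\sigma$-finite ($(X,d)$ being separable), the Radon--Nikodym theorem provides a nonnegative $f\in L^1_{loc}(\lambda)$ with $\mu_\lambda(A)=\int_A f\,d\lambda$ for every $\lambda$-measurable $A$. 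The whole goal then becomes to identify $D(\mu,\lambda,x)$ with $f(x)$ for $\lambda$-a.e.\ $x$: once this is known, $D(\mu,\lambda,\cdot)$ is $\lambda$-measurable and $\int_A D(\mu,\lambda,\cdot)\,d\lambda=\int_A f\,d\lambda=\mu_\lambda(A)$, which is exactly \eqref{e:diff-abs-cont-part}.

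To identify the derivative I would split, at every $x$ with $\lambda(B(x,r))>0$,
\[
\frac{\mu(B(x,r))}{\lambda(B(x,r))}=\frac{1}{\lambda(B(x,r))}\int_{B(x,r)} f\,d\lambda+\frac{\mu_s(B(x,r))}{\lambda(B(x,r))}.
\]
The first term tends to $f(x)$ for $\lambda$-a.e.\ $x$ by the differentiation theorem for $\lambda$, i.e.\ by \eqref{e:lebesgue-diff} applied to $f$, which is precisely the conclusion of Theorem~\ref{thm:diff-thm-from-max-op}. Hence the argument reduces to what I expect to be the main obstacle, the \emph{singular part estimate}: if $\nu:=\mu_s\in M$ satisfies $\nu\perp\lambda$, then $\overline{D}(\nu,\lambda,x)=0$ for $\lambda$-a.e.\ $x$. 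Granting this, the second term tends to $0$ $\lambda$-a.e., so $D(\mu,\lambda,x)$ exists and equals $f(x)$ $\lambda$-a.e., as wanted.

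The singular part estimate I would prove directly from the covering hypothesis (the same one yielding the weak $(1,1)$ bound in Theorem~\ref{thm:diff-thm-from-max-op}). Write $\nu$ as concentrated on a Borel set $S$ with $\lambda(S)=0$ and $\nu(X\setminus S)=0$. Fix $t>0$ and a bounded set $K$, and set $E:=\{x\in K\setminus S:\overline{D}(\nu,\lambda,x)>t\}$; since $\lambda(S)=0$ it suffices to show $\lambda(E)=0$. As $E\subset X\setminus S$ we have $\nu(E)=0$, so by outer regularity together with Borel regularity of $\nu$ (available since $(X,d)$ is separable) there is, for each $\epsilon>0$, an open set $U\supset E$ with $\nu(U)<\epsilon$. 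For each $x\in E$ one can choose arbitrarily small balls $B(x,r)\subset U$ with $\nu(B(x,r))>t\,\lambda(B(x,r))$; applying the covering hypothesis to this family over the bounded set $E$ yields a countable subfamily $\mathcal{F}$ with $\lambda(E)\le C\,\lambda(\bigcup_{B\in\mathcal{F}}B)$ and $\sum_{B\in\mathcal{F}}\chi_B\le N$. Then
\[
\lambda(E)\le C\sum_{B\in\mathcal{F}}\lambda(B)<\frac{C}{t}\sum_{B\in\mathcal{F}}\nu(B)=\frac{C}{t}\int\sum_{B\in\mathcal{F}}\chi_B\,d\nu\le\frac{CN}{t}\,\nu(U)<\frac{CN}{t}\,\epsilon .
\]
Letting $\epsilon\to0$ gives $\lambda(E)=0$, and a countable union over bounded sets $K$ exhausting $X$ and over $t=1/n\to0$ gives $\overline{D}(\nu,\lambda,\cdot)=0$ $\lambda$-a.e.

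The point to watch, and the reason the decomposition route works where a verbatim copy of the proof of \eqref{e:diff-vitali-meas_3} would not, is that the covering hypothesis only covers a fixed fraction of $E$ with bounded overlap and therefore yields the \emph{non-sharp} bound $t\,\lambda(E)\le CN\,\nu(U)$ carrying the extra factor $CN$, rather than the sharp inequalities of Corollary~\ref{cor:bounds-lower-upper-density-vs-meas}. This loss is harmless precisely because the singular part only needs a null-set conclusion, for which any finite constant suffices, while the absolutely continuous part — where sharp constants would be indispensable — is handled exactly by the already-established differentiation theorem for $\lambda$.
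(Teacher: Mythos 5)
Your proposal is correct and follows essentially the same route as the paper: the paper likewise combines the Lebesgue decomposition and the Radon--Nikodym theorem with the differentiation theorem for $\lambda$ (Theorem~\ref{thm:diff-thm-from-max-op}) to identify the density of $\mu_\lambda$, and handles the singular part via Lemma~\ref{lem:density-singular-meas-from-weak-type-1,1}, whose proof is exactly your covering/weak-type argument showing $\overline{D}(\mu_s,\lambda,x)=0$ for $\lambda$-a.e.\ $x$. Your closing remark about why the non-sharp constant $CN$ suffices here (in contrast with the sharp bounds of Corollary~\ref{cor:bounds-lower-upper-density-vs-meas} needed for \eqref{e:diff-vitali-meas_3}) is an accurate reading of the division of labor in the paper's argument.
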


In particular, if $d$ satisfies BCP, then~\eqref{e:diff-abs-cont-part} holds for every $\lambda, \mu \in M$.

\begin{proof}
The fact that there exists $f\in L^1_{loc}(\lambda)$ such that $$ \mu_\lambda(A) = \int_A f(x)\, d\lambda x$$ for every $\lambda$-measurable set $A$ follows from Radon-Nikodym theorem for $\sigma$-finite measures. Theorem~\ref{thm:diff-thm-from-max-op} implies the validity of the differentiation theorem for $\lambda$ and in particular $f(x) = D(\mu_\lambda ,\lambda,x)$ for a.e.~$x\in X$. Then~\eqref{e:diff-abs-cont-part} follows from Lemma~\ref{lem:density-singular-meas-from-weak-type-1,1} below. We apply it to $\nu=\mu_s$, where $\mu_s$ denotes the singular part of the Lebesgue decomposition of $\mu$ relative to $\lambda$, to get that $D(\mu_s,\lambda,x) = 0$, and hence $f(x)= D(\mu_\lambda,\lambda,x)=D(\mu,\lambda,x)$, for $\lambda$-a.e.~$x\in X$. 
\end{proof}

\begin{lemma} \label{lem:density-singular-meas-from-weak-type-1,1}
Let $(X,d)$ be a separable metric space. Let $\lambda \in M$ and assume that the assumption of Theorem~\ref{thm:diff-thm-from-max-op} holds. Let $\nu\in M$ be such that $\nu \perp \lambda$. Then $D(\nu,\lambda,x) = 0$ for $\lambda$-a.e.~$x\in X$.
\end{lemma}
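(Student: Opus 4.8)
The plan is to reproduce the weak type $(1,1)$ computation from the proof of Theorem~\ref{thm:diff-thm-from-max-op}, but with the measure $\nu$ playing the role of the $L^1$ density, and to exploit the singularity of $\nu$ to make the relevant $\nu$-mass arbitrarily small. Since $0\le \underline{D}(\nu,\lambda,x)\le \overline{D}(\nu,\lambda,x)$, it suffices to prove that $\overline{D}(\nu,\lambda,x)=0$ for $\lambda$-a.e.\ $x$. Because $\nu\perp\lambda$, I would first fix a Borel set $A$ with $\nu(A)=0$ and $\lambda(X\setminus A)=0$ (one may take $A$ Borel, replacing it if necessary using Borel regularity). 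As $\lambda$ is concentrated on $A$, proving the claim reduces to showing $\lambda(E)=0$ for every $\alpha>0$, where $E:=\{x\in A:\overline{D}(\nu,\lambda,x)>\alpha\}$; letting $\alpha=1/k$ and taking a countable union then gives the result.

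To invoke the hypothesis of Theorem~\ref{thm:diff-thm-from-max-op}, which is stated for bounded sets, I would fix $x_0\in X$ and work with $E\cap B(x_0,m)$, sending $m\to\infty$ at the end using $\sigma$-finiteness; so I may assume $E$ bounded. The crucial step is to use outer regularity of $\nu$ (valid for measures in $M$ over separable spaces): given $\epsilon>0$, since $\nu(A)=0$ there is an open set $U\supset A\supset E$ with $\nu(U)<\epsilon$. For each $x\in E$ the condition $\overline{D}(\nu,\lambda,x)>\alpha$ yields arbitrarily small radii $r$ with $\nu(B(x,r))>\alpha\,\lambda(B(x,r))$, and since $U$ is open one may in addition require $B(x,r)\subset U$. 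Letting $\bb$ be the family of all such balls, each point of the bounded set $E$ is the center of a ball of $\bb$, so the assumption of Theorem~\ref{thm:diff-thm-from-max-op} produces a countable $\f\subset\bb$ with $\lambda(E)\le C\,\lambda\big(\bigcup_{B\in\f}B\big)$ and $\sum_{B\in\f}\chi_B\le N$.

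The estimate then runs exactly as in the maximal-operator bound:
\[
\lambda(E)\le C\,\lambda\Big(\bigcup_{B\in\f}B\Big)\le C\sum_{B\in\f}\lambda(B)\le \frac{C}{\alpha}\sum_{B\in\f}\nu(B)=\frac{C}{\alpha}\int_X\sum_{B\in\f}\chi_B\,d\nu\le \frac{CN}{\alpha}\,\nu(U)\le \frac{CN}{\alpha}\,\epsilon,
\]
where I use $\lambda(B)<\alpha^{-1}\nu(B)$ for each ball, the pointwise bound $\sum_{B\in\f}\chi_B\le N\chi_U$ (every ball of $\f$ lies in $U$), and finally $\nu(U)<\epsilon$. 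Since the right-hand side is independent of $m$ and of the particular $E$, letting $m\to\infty$ gives the same bound for $\lambda(E)$, and then $\epsilon\downarrow 0$ forces $\lambda(E)=0$.

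I do not expect a genuine obstacle here; the difficulty is purely organizational. The two points to handle carefully are (i) that the covering balls can be chosen simultaneously of arbitrarily small radius \emph{and} inside $U$, which is what lets the covering family meet the hypothesis while remaining trapped in the small-mass set, and (ii) the two localizations, first to the set $A$ on which $\nu$ vanishes and then to a bounded piece so that Theorem~\ref{thm:diff-thm-from-max-op} applies. The conceptual heart is that singularity confines $E$ inside an open set of arbitrarily small $\nu$-measure while the bounded-overlap condition $\sum_{B\in\f}\chi_B\le N$ keeps the total $\nu$-mass of the cover comparable to $\nu(U)$; this is precisely what makes the density of the singular part vanish.
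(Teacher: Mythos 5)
Your proposal is correct and follows essentially the same route as the paper's proof: use singularity to get a Borel set $A$ carrying $\lambda$ with $\nu(A)=0$, enclose it in an open set $U$ of small $\nu$-measure by outer regularity, cover the bounded super-level set of $\overline{D}(\nu,\lambda,\cdot)$ by balls contained in $U$ satisfying $\lambda(B)<\alpha^{-1}\nu(B)$, and run the bounded-overlap estimate to get $\lambda \le \frac{CN}{\alpha}\nu(U)$. The only (immaterial) differences are organizational: the paper fixes $U$ before localizing to a bounded subset of $A$, and disposes of the unbounded case by countable subadditivity rather than a limit in $m$.
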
 

\begin{proof} The proof is very similar to the proof of Theorem~\ref{thm:diff-thm-from-max-op}. Let $A$ be a Borel set such that $\nu(A) = 0 = \lambda (X\setminus A)$. Let $\varepsilon >0$ and let $U$ be an open set such that $A\subset U$ and $\nu(U) \leq \varepsilon$. Let $V$ be a bounded subset of $A$. For $\alpha >0$, set 
$$V_\alpha := \{ x\in V: \overline{D}(\nu,\lambda,x) >\alpha\}.$$
For each $x\in V_\alpha$, we can find a ball $B(x,r_x) \subset U$ such that $$\lambda(B(x,r_x)) < \frac{1}{\alpha} \, \nu(B(x,r_x)).$$ By assumption we can extract from this family of balls of countable subfamily $\f$ such that
\begin{equation*} 
 \lambda (V_\alpha) \leq C \lambda \left(\bigcup_{B\in \f} B\right) \;\text{ and }\; \sum_{B\in \f} \chi_B \leq N.
\end{equation*}
Arguing in a similar way as in the proof of Theorem~\ref{thm:diff-thm-from-max-op}, it follows that 
\begin{equation*}
\lambda (V_\alpha) \leq \frac{CN}{\alpha} \,\nu(U) \leq\frac{CN}{\alpha} \, \varepsilon .
\end{equation*}
Since this holds for every $\varepsilon >0$, we get $\lambda (V_\alpha) = 0$ for every $\alpha >0$. Therefore $$\lambda ( \{ x\in V: \overline{D}(\nu,\lambda,x) >0\}) = 0$$ and the conclusion follows since the latter holds for every bounded subset of $A$ and since $\lambda (X\setminus A) = 0$.
\end{proof}

\medskip

We introduce now a slightly weaker geometric condition than BCP. We say that a family $\bb$ of balls in a metric space $(X,d)$ is a \textit{Besicovitch family of balls} if, first, for every ball $B\in \bb$ with center $x_B$, we have $x_B\not \in B'$ for all $B'\in\bb$, $B\not= B'$, and, second, $\bigcap_{B\in \mathcal{B}} B \not= \emptyset$. 

\begin{definition} [Weak Besicovitch covering property] \label{def:WBCP}
We say that a distance $d$ on a space $X$ satisfies the \textit{weak Besicovitch covering property} (WBCP) if there is an integer $K\geq 1$ such that $\card \bb \leq K$ for every Besicovitch family $\bb$ of balls in $(X,d)$.
\end{definition}

\medskip
Going back to the Euclidean setting, Lemma~\ref{lem:WBCP-Rn} exactly shows that the Euclidean distance satisfies WBCP.

\medskip

It can easily be seen that BCP implies WBCP. But, as the terminology suggests, WBCP is in general strictly weaker than BCP, see~\cite[Example 3.4]{LeDonneRigot3}, and the comment after Proposition~\ref{prop:doubling-bcp-wbcp} for the more precise relationship between BCP and WBCP.

\medskip

There is however an important class of metric spaces, namely, doubling metric spaces, where BCP and WBCP are equivalent. Recall that a metric space is said to be doubling if there is an integer $D\geq 1$ such that for each $0<r<\infty$, each ball in $(X,d)$ of radius $2r$ can be covered by a family of at most $D$ balls of radius $r$. 

\begin{proposition} \label{prop:doubling-bcp-wbcp}
Let $(X,d)$ be a doubling metric space. Then $d$ satisfies BCP if and only if $d$ satisfies WBCP.
\end{proposition}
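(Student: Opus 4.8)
The plan is to prove the two implications separately; the content of the statement is the converse implication, since \emph{WBCP} is weaker than \emph{BCP} in general.

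\emph{BCP $\Rightarrow$ WBCP.} This holds in any metric space and is the easy direction. Let $\bb=\{B(x_i,r_i)\}$ be a Besicovitch family and set $A:=\{x_i\}$, a bounded set each of whose points is the center of a ball of $\bb$. By BCP there is $\f\subset\bb$ covering $A$ with overlap at most $N$. Since $x_i\notin B(x_j,r_j)$ for $j\neq i$, the only ball of $\bb$ containing $x_i$ is $B(x_i,r_i)$; hence $\f$ must contain every ball of $\bb$, i.e.\ $\f=\bb$. Evaluating the overlap bound at any point of $\bigcap_{B\in\bb}B$ gives $\card\bb\le N$, so WBCP holds with $K=N$.

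\emph{WBCP $+$ doubling $\Rightarrow$ BCP.} Assume WBCP holds with constant $K$ and $(X,d)$ is doubling with constant $D$. Iterating the doubling condition shows that bounded sets are totally bounded and that any ball $B(y,2\rho)$ contains at most $D^2$ points that are pairwise more than $\rho$ apart. Given a bounded $A$ and a family $\bb$ as in Definition~\ref{def:BCP}, I would choose for each $x\in A$ one ball $B(x,r(x))\in\bb$; if some $r(x)\ge\diam A$ then $B(x,r(x))$ already covers $A$ and we are done, so we may assume $\sup_x r(x)<\infty$. I then run the standard greedy selection: set $R_1:=\sup_x r(x)$, pick $x_1$ with $r(x_1)>R_1/2$, and inductively, having chosen $x_1,\dots,x_{j-1}$, let $A_j:=A\setminus\bigcup_{i<j}B(x_i,r(x_i))$, set $R_j:=\sup_{x\in A_j}r(x)$ and pick $x_j\in A_j$ with $r(x_j)>R_j/2$. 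Writing $B_j:=B(x_j,r(x_j))$, this produces a subfamily $\f=\{B_j\}$ (countable, by total boundedness) with two key properties: for $i<j$ one has $x_j\notin B_i$, whence $d(x_i,x_j)>r(x_i)$, and $r(x_j)<2\,r(x_i)$. Total boundedness forces $R_j\to 0$ (else infinitely many centers pairwise separated by a fixed positive amount sit in the bounded set $A$), and since an uncovered $y\in A$ would satisfy $r(y)\le R_j\to 0$, the family $\f$ covers $A$.

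It remains to bound, for an arbitrary $y\in X$, the number of balls $B_j\in\f$ containing $y$. Index these by $I$, ordered by selection, with radii $r_i:=r(x_i)$. If $i<i'$ in $I$ and $r_i\ge r_{i'}$, then $x_{i'}\notin B_i$ and, since $d(x_i,x_{i'})>r_i\ge r_{i'}$, also $x_i\notin B_{i'}$; hence any subfamily of $\{B_i\}_{i\in I}$ with non-increasing radii is a Besicovitch family (all the balls contain $y$) and so has at most $K$ elements by WBCP. If instead the radii along a subfamily are strictly increasing, then $r_{\mathrm{first}}\le r_i<2\,r_{\mathrm{first}}$ throughout, so the centers lie in $B(y,2r_{\mathrm{first}})$ and are pairwise more than $r_{\mathrm{first}}$ apart; by the doubling bound such a subfamily has at most $D^2$ elements. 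An Erd\H{o}s--Szekeres argument — mapping each $i\in I$ to the pair consisting of the length of the longest non-increasing subsequence of radii ending at $i$ and the length of the longest strictly increasing one, a map which is injective — then gives $\card I\le K D^2$. Thus BCP holds with $N:=K D^2$.

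The main obstacle is precisely this overlap estimate: the radii of the selected balls through $y$ are \emph{not} monotone, the construction only guaranteeing $r(x_j)<2r(x_i)$ for $i<j$, so neither WBCP nor doubling applies directly to the whole collection. The key idea is to separate the two mechanisms along monotone subsequences — decreasing radii yield genuine Besicovitch families controlled by WBCP, while increasing radii force comparable scales controlled by doubling — and to combine them through the Erd\H{o}s--Szekeres injection. The remaining reductions (bounding the radii and deducing the covering property from total boundedness) are routine once the doubling hypothesis is invoked.
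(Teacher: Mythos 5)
Your proof is correct, and in substance it is the argument the paper points to: the paper gives no proof of Proposition~\ref{prop:doubling-bcp-wbcp} itself, referring instead to~\cite[Proposition 3.7]{LeDonneRigot3}, which follows the scheme of~\cite[Theorem 2.7]{mattila} --- precisely your scheme of a greedy (Vitali-type) selection followed by an overlap count that plays WBCP and the doubling property against each other. Two remarks. First, a small repair in the easy direction BCP $\Rightarrow$ WBCP: your set $A=\{x_i\}$ of centers need not be bounded, since the radii of a Besicovitch family are not a priori bounded (only a common point is required), whereas Definition~\ref{def:BCP} applies only to bounded sets; this is fixed in one line by observing that every subfamily of a Besicovitch family is again a Besicovitch family, so it suffices to bound the cardinality of finite subfamilies, whose center sets are trivially bounded. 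Second, your handling of the genuine difficulty --- that the selected balls through a point $y$ need not form a Besicovitch family, the greedy construction giving only $r_j<2r_i$ for $i<j$ rather than monotonicity --- is sound: non-increasing radius subsequences are Besicovitch families (at most $K$ elements by WBCP), strictly increasing ones have all radii within a factor $2$ of the first and centers pairwise separated by more than that first radius (at most $D^2$ elements by the packing count coming from doubling), and the Erd\H{o}s--Szekeres injection $i\mapsto(a_i,b_i)$ then gives $N=KD^2$. The proofs cited by the paper organize this last step through a somewhat different case analysis (in the Euclidean case of~\cite[Theorem 2.7]{mattila}, partly via angle estimates specific to $\R^n$), so your purely combinatorial, metric-space-internal combination is a clean alternative packaging; the essential ingredients and the overall strategy coincide.
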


For a proof of Proposition~\ref{prop:doubling-bcp-wbcp}, we refer for instance to~\cite[Proposition 3.7]{LeDonneRigot3} which follows actually closely the arguments of the proof of~\cite[Theorem 2.7]{mattila} where the validity of BCP for the Euclidean distance, namely, Theorem~\ref{thm:strong-bcp-Rn}~$(i)$, is deduced from the validity of WBCP.

\medskip

Although a distance $d$ that satisfies WBCP may not satisfy BCP in general, it satisfies a weak form of BCP that can be stated as follows. There is an integer $N\geq 1$ such that the following holds. Let $A$ be a bounded subset of $X$. Let $\mathcal{B}$ be a family of balls such that each point of $A$ is the center of some ball of $\mathcal{B}$ and such that either $\sup\{r:\; B(x,r)\in \mathcal{B}\}=+\infty$ or $B(x,r)\in \mathcal{B} \mapsto r$ attains only an isolated set of values in $(0,+\infty)$. Then there is a countable subfamily $\mathcal{F}\subset \mathcal{B}$ such that the balls in $\mathcal{F}$ cover $A$ and every point in $X$ belongs to at most $N$ balls in $\mathcal{F}$. Mild modifications of the arguments presented in this section can be used to prove the validity of the differentiation theorem for every locally finite Borel regular measure from this weak form of Besicovitch covering property. 

\medskip

We conclude this section with the notion of $\sigma$-finite dimensional metric that generalizes WBCP and is due to Preiss, see~\cite{P}. This notion involves the decomposition of the ambient space into countably many pieces on which an ad hoc version of WBCP holds.

\begin{definition} [$\sigma$-finite dimensional metric]
Let $(X,d)$ be a metric space. We say that $d$ is finite dimensional on a subset $Y\subset X$ if there exist constants $K\geq 1$ and $0<r\leq \infty$ such that $\card \mathcal{B} \leq K$ for every Besicovitch family $\bb$ of balls in $(X,d)$ centered on $Y$ with radius $<r$. We say that $d$ is \textit{$\sigma$-finite dimensional} if $X$ can be written as a countable union of subsets on which $d$ is finite dimensional. 
\end{definition}

With this terminolgy, the fact that WBCP holds on a metric space $(X,d)$ exactly means that the distance $d$ is finite dimensional on $X$ for some constant $K\geq 1$ and with $r=\infty$ in the previous definition. Examples of $\sigma$-finite dimensional metrics are Riemannian metrics over Riemannian manifolds of class $\geq 2$, see~\cite[Chapter~2]{federer}. On the contrary, (non-Riemannian) sub-Riemannian distances are not $\sigma$-finite dimensional, see~\cite[Theorem~7.5]{LeDonneRigot3}.
We end this section with the following characterization whose proof can be found in~\cite{P}.

\begin{theorem} [{\cite[Theorem~1]{P}}]\label{thm:preiss-diff}
Let $(X,d)$ be a complete separable metric space. The  differentiation theorem holds for every locally finite Borel regular measure over $(X,d)$ if and only if $d$ is $\sigma$-finite dimensional.
\end{theorem}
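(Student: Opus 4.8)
The plan is to treat the two implications separately; both are substantial, but the \emph{necessity} direction is where the genuine difficulty lies. I begin with the easier \emph{sufficiency} direction: assuming $d$ is $\sigma$-finite dimensional, I want the differentiation theorem for an arbitrary $\lambda \in M$. Write $X = \bigcup_{n\geq 1} Y_n$ where $d$ is finite dimensional on each $Y_n$, with constants $K_n \geq 1$ and radii $0 < r_n \leq \infty$. After reducing to $f \in L^1(\lambda)$ by the exhaustion argument from the proof of Theorem~\ref{thm:diff-thm-from-max-op}, it suffices to show that the oscillation $\Omega f$ vanishes $\lambda$-a.e.\ on each $Y_n$ and then sum over $n$ (by countable subadditivity the full exceptional set is then $\lambda$-null, regardless of whether the $Y_n$ are measurable). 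So I would localise the maximal-function argument to a maximal operator built from balls of radius $<r_n$ centered on $Y_n$: the role played by WBCP in Theorem~\ref{thm:diff-thm-from-max-op} is now played by the finite dimensionality of $d$ on $Y_n$, which is exactly a local WBCP, and from it I extract a bounded-overlap covering following the scheme referenced in Proposition~\ref{prop:doubling-bcp-wbcp} (where a doubling hypothesis upgrades it to full BCP).

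The essential subtlety on this side is that passing from a cardinality bound on Besicovitch families to a covering with controlled overlap is not automatic without doubling: the extraction works cleanly only when the radii take isolated values or have infinite supremum (the weak form of BCP discussed before the definition of $\sigma$-finite dimensionality), whereas in the maximal-function construction the radii $r_x$ are arbitrary. I would therefore discretise the radii to a geometric scale $\{t^k : k \in \Z\}$, which forces the isolated-radii hypothesis, at the cost of enlarging each chosen ball slightly and incurring a multiplicative error controlled by $t$, then let $t \downarrow 1$. Since $\lambda$ is not assumed doubling, keeping this error under control while ensuring the extracted subfamily still captures the relevant mass is the main obstacle here; this is precisely the mechanism encapsulated in the weak form of BCP quoted in the text.

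For the \emph{necessity} direction I argue by contraposition: assuming $d$ is not $\sigma$-finite dimensional, I construct a locally finite Borel regular measure $\lambda$ together with a Borel set $E$ with $\chi_E \in L^1_{loc}(\lambda)$ for which~\eqref{e:lebesgue-diff} fails on a set of positive $\lambda$-measure. Failure of $\sigma$-finite dimensionality means that the natural decomposition of $X$ by the sets on which every Besicovitch family of radius $<1/j$ has cardinality at most $m$ never exhausts $X$; hence there exist, at arbitrarily small scales and clustering near the points of a residual set, Besicovitch families $\{B(x_i,r_i)\}_i$ of unbounded cardinality. The balls of such a family share a common point while their centers mutually avoid one another, and this is exactly the configuration that lets one distribute mass and choose $E$ so that the averages of $\chi_E$ over the shared small balls remain bounded away from the value of $\chi_E$ at the common point.

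Organising these families across scales along a tree of nested balls, and using completeness of $(X,d)$ (so that $X$ is a Baire space and the clustering argument is available), I would build $\lambda$ and $E$ by an inductive refinement in which each stage refines the previous covering while keeping $\lambda$ locally finite and Borel regular. Turning the purely combinatorial failure of finite dimensionality into this analytic counterexample is the crux of the theorem and the step I expect to be hardest: one must simultaneously control the geometry of the Besicovitch families (so that their overlaps force the density of $E$ to oscillate on a set of positive measure) and the measure-theoretic bookkeeping (local finiteness, Borel regularity, and convergence of the inductive construction). For the complete argument in both directions I would follow Preiss~\cite{P}.
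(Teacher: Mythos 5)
The paper does not prove Theorem~\ref{thm:preiss-diff}: it states the result as a quotation of Preiss and refers to~\cite{P} for the proof, and your proposal ends the same way, so at that level the two agree. What you add in between, however, has to be judged as a proof sketch, and the one step where you commit to a concrete mechanism does not work.

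That step is in the sufficiency direction. Having localised the maximal-function argument of Theorem~\ref{thm:diff-thm-from-max-op} to a piece $Y_n$ on which $d$ is finite dimensional, you propose to obtain the missing bounded-overlap extraction by discretising the radii to a geometric scale $\{t^k : k\in\Z\}$, enlarging each chosen ball to the next scale at a ``multiplicative error controlled by $t$'', and letting $t\downarrow 1$. For a non-doubling $\lambda$ there is no such error control: if $B(x,r_x)$ satisfies $\lambda(B(x,r_x))<\alpha^{-1}\int_{B(x,r_x)}|f|\,d\lambda$, the enlarged ball $B(x,t^k)$ with $r_x\le t^k< t\,r_x$ need not satisfy any comparable inequality, because the ratio $\lambda(B(x,t^k))/\lambda(B(x,r_x))$ is precisely what doubling would control and is unbounded in general; rounding the radii down instead, or restricting the maximal operator to the discrete radii $t^k$ and comparing averages at intermediate radii, runs into the same uncontrolled ratio. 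Appealing to the weak form of BCP quoted in Section~\ref{sect:finite-dim-metrics} is circular here: that statement takes the isolated-radii family as a \emph{hypothesis}, whereas the whole difficulty is to produce a family that simultaneously satisfies the density inequalities and the isolated-radii condition. This is not a removable technicality; it is exactly the point where the non-doubling case stops being a routine variant of the arguments of Sections~\ref{sect:Vitali-type-meas} and~\ref{sect:finite-dim-metrics}, and where Preiss's actual argument is needed. On the necessity side, your outline (contraposition, Besicovitch families of unbounded cardinality at all small scales, a tree of nested balls, completeness entering through the limiting construction) is a fair description of the shape of Preiss's construction, but it contains no construction: the measure $\lambda$, the set $E$, and the verification that~\eqref{e:lebesgue-diff} fails on a set of positive $\lambda$-measure are all deferred to~\cite{P}. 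As written, then, your text is a plan that correctly locates the difficulties but proves neither implication; since the paper itself only cites~\cite{P}, that citation is in effect the proof you are both giving.
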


\section{Weak Besicovitch covering property} \label{sect:wbcp}

We devote this last section to a closer study of the weak Besicovitch covering property, see Definition~\ref{def:WBCP}. Besides the fact that it implies the validity of the differentiation theorem for every locally finite Borel regular measures over a separable metric space, WBCP is a useful tool that can be used to deduce global properties of a metric space from local ones, and can for instance be used to study arbitrary measures.

\medskip

We begin with a couple of general facts. We first observe that WBCP is a property for family of balls in a metric space $(X,d)$ that may not be preserved under (even arbitrarily small) perturbations of the metric. 

\begin{theorem}[{\cite[Theorem 1.6]{LeDonneRigot2}}] \label{thm:destroybcp}
Let $d$ be a distance on a space $X$. Assume that there exists an accumulation point in $(X,d)$. Then, for every $0<c<1$, there exists a distance $d_c$ on $X$ that does not satisfy WBCP and such that $cd \leq d_c \leq d$.
\end{theorem}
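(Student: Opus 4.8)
The plan is to perturb $d$ only along the edges joining a fixed accumulation point to a rapidly converging sequence, and to exhibit the failure of WBCP through the balls centered at the sequence points that all contain the accumulation point. The tension to resolve is that, to build a large Besicovitch family near the accumulation point, I want to shrink the distances from that point to the sequence as much as the constraint $d_c\geq cd$ allows (so that the point sits in every ball with a very small radius), while keeping the mutual distances between the centers large (so that no center falls into a neighbor's ball); the path-metric construction below handles the triangle inequality automatically, leaving a purely quantitative balancing act.

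First I would fix an accumulation point $x_0$ and extract a sequence. Since $x_0$ is an accumulation point, there exist distinct points converging to it; passing to a subsequence I obtain distinct $x_1,x_2,\dots$ with $\rho_n := d(x_0,x_n)>0$ strictly decreasing to $0$ and satisfying the rapid-decay condition $\rho_{n+1}<(1-c)\rho_n$ for every $n$, which is possible because the values $\rho_n$ accumulate at $0$. Next I would define $d_c$ as a path (intrinsic) metric: introduce the symmetric weight $w(x_0,x_n)=c\rho_n$ on the edges from $x_0$ to the sequence points and $w(x,y)=d(x,y)$ on every other pair, and set $d_c(x,y):=\inf\sum_k w(z_k,z_{k+1})$ over finite chains $x=z_0,\dots,z_m=y$. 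Because $w(x,y)\geq c\,d(x,y)$ for every pair (with equality $c\rho_n=c\,d(x_0,x_n)$ on the modified edges), summing along a chain and using the triangle inequality for $d$ gives $d_c\geq c\,d$; since $w\leq d$ and a single edge is an admissible chain, $d_c\leq d$. Thus $d_c$ is positive on distinct points, hence a genuine metric (symmetry and the triangle inequality being automatic for path metrics), and $cd\leq d_c\leq d$. Moreover $d_c(x_0,x_n)=c\rho_n$: the direct edge gives $\leq$ and the bound $d_c\geq c\,d$ gives $\geq$.

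Then I would show that $\{B_{d_c}(x_n,c\rho_n)\}_{n\geq 1}$ is a Besicovitch family, every initial segment of which has cardinality $N$, contradicting WBCP. The common point is $x_0$, since $d_c(x_0,x_n)=c\rho_n$ forces $x_0\in B_{d_c}(x_n,c\rho_n)$ for all $n$ (recall balls are closed). It remains to check that no center lies in another ball, i.e.\ $d_c(x_i,x_j)>c\rho_{\min(i,j)}$; by symmetry assume $i<j$, so I must show $d_c(x_i,x_j)>c\rho_i$. The key estimate comes from a dichotomy on chains: any chain from $x_i$ to $x_j$ either passes through $x_0$, in which case its $w$-length is at least $d_c(x_i,x_0)+d_c(x_0,x_j)=c(\rho_i+\rho_j)>c\rho_i$, or it avoids $x_0$, in which case it uses no modified edge, so its $w$-length equals its $d$-length and is at least $d(x_i,x_j)\geq \rho_i-\rho_j>c\rho_i$, the last inequality being exactly the decay condition $\rho_j\leq \rho_{i+1}<(1-c)\rho_i$. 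Taking the infimum yields $d_c(x_i,x_j)>c\rho_i\geq c\rho_j$, so the family is Besicovitch.

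The main obstacle is precisely this lower bound $d_c(x_i,x_j)>c\rho_i$: the path metric is free to route the distance between two centers through the cheapened hub $x_0$, and I must verify that this shortcut, which costs $c(\rho_i+\rho_j)$, still exceeds the radius $c\rho_i$, while simultaneously the direct route survives the contraction by $c$. Making the "through $x_0$ or not" dichotomy cooperate with the decay rate $\rho_{n+1}<(1-c)\rho_n$ is the crux of the argument; everything else is routine verification that $d_c$ is a metric squeezed between $cd$ and $d$.
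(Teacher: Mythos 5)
Your proof is correct: the path metric with cheapened edges from $x_0$ to a rapidly converging sequence satisfies $cd\leq d_c\leq d$, and the chain dichotomy (through $x_0$, cost $\geq c(\rho_i+\rho_j)$; avoiding $x_0$, cost $\geq \rho_i-\rho_j$) combined with the decay condition $\rho_{n+1}<(1-c)\rho_n$ gives the uniform lower bound $d_c(x_i,x_j)>c\rho_i$, so the balls $B_{d_c}(x_n,c\rho_n)$ form arbitrarily large Besicovitch families and WBCP fails. The paper itself does not reproduce a proof but only cites \cite{LeDonneRigot2}, and your construction is essentially the one used there (the shortcut through the accumulation point is written there as an explicit minimum formula rather than as a path metric), so this is the same approach.
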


See also~\cite[Theorem~3]{P} for a version of this result about $\sigma$-finite dimensional metrics. Conversely, there are examples of metric spaces $(X,d)$ that do not satisfy WBCP but for which there exists for every $\epsilon >0$, a distance $d_\epsilon$ on $X$ that satisfies WBCP and is such that $d\leq d_\epsilon \leq (1+\epsilon) d$. See the example of the stratified first Heisenberg group, Example~\ref{ex:stratified-heis} below. WBCP is in particular far from being preserved by a bi-Lipschitz change of metric. 

\medskip

On the other hand, let $d$ and $\rho$ be two distances on a space $X$ and assume that every ball with respect to $\rho$ is a ball with respect to $d$, with the same center but possibly a different radius. Then the validity of WBCP for $d$ implies the validity of WBCP for $\rho$. For instance if WBCP holds for a distance $d$, then, for every $0<s<1$, the snowflake distance $d^s$ satisfies WBCP as well. Note that on the contrary it is well known that a metric space $(X,d)$ and its snowflakes $(X,d^s)$, $0<s<1$, have for many other purposes significantly different behaviour.

\medskip

Our next observation concerns product of metric spaces. Given two metric spaces $(X,d_X)$ and $(Y,d_Y)$, there are many ways to define distances on their product. If $(X,d_X)$ and $(Y,d_Y)$ both satisfy WBCP, then WBCP may fail for classical choices of distances on $X\times Y$ such as their $l^p$-mean for $1\leq p < \infty$. Such examples are the following ones. Given $1\leq p < \infty$ and $s>p$, let $d$ be the distance on $\R \times \R$ defined by $d((x,y),(x',y')):=(|x'-x|^p + |y'-y|^{p/s})^{1/p}$. It is the $l^p$-mean of two distances on $\R$ that satisfy WBCP, but WBCP does not hold on $(\R\times\R,d)$, see~\cite[Lemma~3.2]{LeDonneRigot1}. However, considering the max distance on a product of metric spaces preserves the validity of WBCP. Namely,

\begin{theorem} \label{thm:WBCP-product-spaces} \cite[Theorem 3.16]{LeDonneRigot3} Let $(X,d_X)$ and $(Y,d_Y)$ be two metric spaces. Assume that $d_X$ and $d_Y$ satisfy WBCP on $X$ and $Y$ respectively. Then the max distance $$d_{X\times Y}((x,y),(x',y')):=  \max(d_X(x,x'),d_Y(y,y'))$$ satisfies WBCP on $X\times Y$.
\end{theorem}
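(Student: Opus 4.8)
The plan is to exploit the defining feature of the max distance: a ball in $X\times Y$ is exactly a product of two balls of the \emph{same} radius in the factors. Indeed, for any $(x,y)\in X\times Y$ and $0<r<\infty$,
$$B((x,y),r) = B(x,r)\times B(y,r),$$
since $\max(d_X(x,x'),d_Y(y,y'))\le r$ holds if and only if $d_X(x,x')\le r$ and $d_Y(y,y')\le r$. This is precisely where the max distance is used, and it is this equal-radius product structure that fails for the $l^p$-means with $p<\infty$. Let $K_X$ and $K_Y$ denote the WBCP constants for $d_X$ and $d_Y$. I will bound the cardinality of an arbitrary Besicovitch family in $(X\times Y,d_{X\times Y})$ in terms of $K_X$ and $K_Y$ alone.

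So let $\mathcal{B}=\{B_1,\dots,B_k\}$ with $B_i=B((x_i,y_i),r_i)$ be a Besicovitch family in the product, and pick a common point $(a,b)\in\bigcap_{i=1}^k B_i$. By the product-of-balls description, $a\in B(x_i,r_i)$ and $b\in B(y_i,r_i)$ for every $i$, so each factor family $\{B(x_i,r_i)\}_i$ and $\{B(y_i,r_i)\}_i$ has nonempty common intersection. The center condition $(x_i,y_i)\notin B_j$ for $i\ne j$ translates, again via the product description, into the disjunction
$$d_X(x_i,x_j) > r_j \quad\text{or}\quad d_Y(y_i,y_j) > r_j .$$
The main obstacle is exactly this \emph{or}: the escape may occur in either factor, and in different factors for different pairs, so neither factor family is automatically a Besicovitch family.

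To resolve this I will first symmetrize using the radii. For an unordered pair $\{i,j\}$ with $r_i\le r_j$, the inequality $d_X(x_i,x_j)>r_j$ already forces $x_i\notin B(x_j,r_j)$ and, since $r_j\ge r_i$, also $x_j\notin B(x_i,r_i)$; hence escape with respect to the \emph{larger} radius controls both ordered directions within that factor. I therefore $2$-color the edges of the complete graph on $\{1,\dots,k\}$: given $\{i,j\}$ with $r_i\le r_j$, color it $X$ if $d_X(x_i,x_j)>r_j$, and otherwise $Y$ (in which case $d_Y(y_i,y_j)>r_j$). The disjunction above guarantees this coloring is well defined.

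If $S$ is a monochromatic $X$-clique, then for every pair in $S$ the $X$-escape holds with respect to the larger radius, so by the previous remark the balls $\{B(x_i,r_i):i\in S\}$ satisfy the center condition in both ordered directions and share the common point $a$; they thus form a Besicovitch family in $(X,d_X)$, whence $\card S\le K_X$. Symmetrically, a monochromatic $Y$-clique has at most $K_Y$ elements. By Ramsey's theorem there is a finite integer $R=R(K_X+1,K_Y+1)$ such that any $2$-coloring of the complete graph on $R$ vertices contains an $X$-clique of size $K_X+1$ or a $Y$-clique of size $K_Y+1$; as both are impossible, we conclude $k\le R-1$, so $K:=R(K_X+1,K_Y+1)-1$ witnesses WBCP for $d_{X\times Y}$. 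I expect the radius-symmetrization step to be the crux, as it is what converts the one-sided, factor-ambiguous escape condition into genuine factor-wise Besicovitch families to which the hypotheses on $X$ and $Y$ can be applied.
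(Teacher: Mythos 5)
Your proof is correct and takes essentially the same route as the paper's, which (as the notes indicate by deferring to~\cite[Theorem 3.16]{LeDonneRigot3}) rests on the product-ball structure of the max distance together with a two-coloring of pairs by the factor in which separation occurs and an application of Ramsey's theorem. The radius-symmetrization step you single out, using the larger of the two radii so that a monochromatic clique yields a genuine Besicovitch family in the corresponding factor, is exactly the point that makes this coloring argument work.
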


\medskip

The proof of Theorem~\ref{thm:WBCP-product-spaces} relies on tools from graph theory, namely, Ramsey's Theorem. We refer to~\cite{LeDonneRigot3} for a proof. We also refer to the latter paper, and in particular to its Section~3, for further general results about WBCP.

\medskip

Until recently, there were only few known examples of metric spaces satisfying WBCP. As far as we know, finite dimensional normed vector spaces, see Remark~\ref{rk:directionally-limited-metrics}, were the main known such examples. We end this section with recent results that enlarge this picture.

\medskip

The setting we are considering in the rest of these notes is the one of homogeneous groups equipped with homogeneous distances. Roughly speaking, a homogeneous group is a Lie group equipped with an appropriate family of dilations. A homogeneous distance on a homogeneous group is a left-invariant distance that is one-homogeneous with respect to the family of dilations, see below for more detailed definitions. To some extend, homogeneous groups equipped with homogeneous distances generalize naturally finite dimensional normed vector spaces. Due to the presence of translations and dilations, they provide for instance a setting where many aspects of classical analysis and geometry can be carried out. However, it is also well known that, for many purposes, their behaviour can be significantly different from the behaviour of finite dimensional normed vector spaces. The discussion below about the validity or non validity of WBCP will give some more evidence about these differences. We also recall that, beyond such a priori considerations, homogeneous groups equipped with homogeneous distances form an important framework because of their occurrences in many settings, see for instance~\cite{folland-stein}, or the introduction in~\cite{LeDonneRigot3} and the references therein.

\medskip

Before stating the results, we introduce some definitions and terminology. We refer to~\cite{LeDonneRigot3} for a complete presentation, see also Examples~\ref{ex:step-1},~\ref{ex:stratified-heis} and~\ref{ex:non-standard-heis} below for some explicit examples.

\medskip

A positive grading of a finite dimensional real Lie algebra $\g$ is a family $(V_t)_{t\in (0,+\infty)}$ of vector subspaces of $\g$ where all but finitely many of the $V_t$'s are $\{0\}$ and such that 
$$\g=  \oplus_{t>0} V_t \;\;  \text{  with } [V_s, V_t]\subset V_{s+t} \; \text{ for all } s,t >0.$$
Here
$
[V,W]: = \Span\{[X,Y] :\; X\in V,\ Y\in W\} .
$
Given a positive grading $(V_t)_{t>0}$ of a Lie algebra and $t>0$, the subspace $V_t$ is called the degree $t$ layer of the grading. We say that a Lie algebra is positively graduable if it admits a positive grading. In general, a positively graduable Lie algebra admits several positive gradings that are not isomorphic as graded Lie algebras. We say that a Lie algebra is graded if it is positively graduable and endowed  with a fixed positive grading. 

\medskip

For a graded Lie algebra $\g= \oplus_{t>0} V_t $ and $r>0$, the associated dilation of factor $r$ is defined as the unique linear map $\delta_r:\g\to\g$ such that $\delta_r (X) = r^t X$ for $X\in V_t$. The family of associated dilations $(\delta_r)_{r>0}$ is a one-parameter group of Lie algebra automorphisms.

\medskip

We say that a connected and simply connected Lie group $G$ is graded if its Lie algebra is graded. For a graded group $G$ and $r>0$, we define, with a slight abuse of notation and terminology, the associated dilation $\delta_r : G \rightarrow G$ of factor $r$ as the unique Lie group automorphism such that $\delta_r\circ\exp = \exp\circ\,\delta_r$ where $\exp: \g \to G$ denotes the exponential map. Recall that graded groups are connected and simply connected nilpotent Lie groups hence the exponential map is a diffeomorphism from $\g$ to $G$. 

\medskip

We say that a distance $d$ on a graded group $G$ with associated family of dilations $(\delta_r)_{r>0}$ is homogeneous if it is left-invariant, that is, $d(p\cdot q,p\cdot q') = d(q,q')$ for all $p$, $q$, $q'\in G$, and one-homogeneous with respect to the associated family of dilations $(\delta_r)_{r>0}$, that is, $d(\delta_r(p),\delta_r(q)) = r d(p,q)$ for all $p$, $q\in G$ and all $r>0$.

\medskip

Homogeneous distances do exist on a graded group if and only if, for all $t<1$, degree $t$ layers of the associated positive grading are $\{0\}$. We call such groups \textit{homogeneous}. For general graded groups, one can consider homogeneous quasi-distances. For simplicity, we restrict ourselves in these notes to homogeneous groups and we refer to~\cite{LeDonneRigot3} for the more general case of graded groups. 

\medskip
Note that we do not require topological assumptions in the definition of homogeneous distances. It can indeed be proved that homogeneous distances on a homogeneous group induce the manifold topology, see~\cite[Proposition 2.26]{LeDonneRigot3}. Note also that a homogeneous group equipped with a homogeneous distance is doubling, hence the validity of WBPC is equivalent to the validity of BCP in this setting. Furthermore, it can be proved that a homogeneous distance on a homogeneous group is $\sigma$-finite dimensional if and only if it satisfies WBCP, see~\cite[Proposition~6.1]{LeDonneRigot3}.

\medskip

The following theorem gives a characterization of homogeneous groups that admit homogeneous distances for which WBCP holds. We say that a graded group $G$ with associated positive grading of its Lie algebra given by $\g = \oplus_{t>0} V_t$ has \textit{commuting different layers} if $[V_t,V_s] = \{0\}$ for all $t$, $s>0$ such that $t\not= s$. 

\begin{theorem} \cite[Corollary~1.3]{LeDonneRigot3} \label{thm:bcp-homogeneous-groups}
Let $G$ be a homogeneous group. There exist homogeneous distances on $G$ for which WBCP holds if and only if $G$ has commuting different layers.
\end{theorem}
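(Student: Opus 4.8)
The plan is to prove the two implications separately, writing $e$ for the identity of $G$ and $\|p\| := d(e,p)$ for the homogeneous norm attached to a given homogeneous distance $d$. Since a homogeneous distance induces the manifold topology and $\delta_r$ dilates norms by $r$, the closed unit ball is compact and the unit sphere $S := \{p : \|p\| = 1\}$ is compact as well. The common reformulation driving both directions is this: by left invariance we may place the common point of a Besicovitch family at $e$, and after minimizing radii we may take $r_i = \|x_i\|$, so the defining conditions $x_i \notin B(x_j,r_j)$ and $e \in \bigcap_i B(x_i,r_i)$ become $\|x_i\| \le r_i$ together with $d(x_i,x_j) > \max(\|x_i\|,\|x_j\|)$ for $i \neq j$. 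Rescaling each center to $\hat x_i := \delta_{1/\|x_i\|}(x_i) \in S$, WBCP (Definition~\ref{def:WBCP}) amounts to a uniform bound on the cardinality of families in $S$ satisfying the corresponding admissibility relations; the crux in each direction is whether these relations force the $\hat x_i$ to be uniformly separated on the compact sphere $S$, bounding $k$ as in the Euclidean Lemma~\ref{lem:WBCP-Rn}, or allow them to accumulate so that $k$ is unbounded.

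For the implication \emph{commuting different layers $\Rightarrow$ existence of a WBCP distance}, I would first exploit the hypothesis $[V_t,V_s]=\{0\}$ for $t\neq s$ to decompose $\g$ as a direct sum of graded ideals: grouping the degrees into classes closed under the only surviving brackets $[V_t,V_t]\subset V_{2t}$, all cross-class brackets vanish, so each class spans an ideal and these ideals pairwise commute. This exhibits $G$ as an internal direct product $G = H_1 \times \cdots \times H_m$ of homogeneous groups, and since the dilations of $G$ restrict to dilations of each factor, the max distance $\max_i d_i$ of homogeneous distances $d_i$ on the $H_i$ is again left invariant and one-homogeneous, hence a homogeneous distance on $G$. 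By Theorem~\ref{thm:WBCP-product-spaces} it satisfies WBCP as soon as each $d_i$ does, so it suffices to construct a WBCP homogeneous distance on each factor. On a factor one builds an explicit homogeneous norm adapted to the grading and verifies WBCP through the reformulation above: the commuting-layers structure makes the group multiplication, and thus the pairwise distances $d(x_i,x_j)$, controlled strongly enough by the lowest layer that the admissibility relations do force the normalized centers $\hat x_i$ to stay uniformly separated on $S$, exactly along the template of Lemma~\ref{lem:WBCP-Rn} and of finite-dimensional normed spaces (Remark~\ref{rk:directionally-limited-metrics}).

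For the converse I argue by contraposition: assuming $[V_s,V_t]\neq\{0\}$ for some $s\neq t$, I must show that \emph{no} homogeneous distance satisfies WBCP. Fix an arbitrary homogeneous $d$, choose $X\in V_s$ and $Y\in V_t$ with $[X,Y]\neq 0$, and pass to the homogeneous subgroup generated by $X$ and $Y$; the restriction of $d$ to this subgroup is again a homogeneous distance, and Besicovitch families in the subgroup are Besicovitch families in $G$, so the problem reduces to a low-dimensional model in which the two non-commuting layers scale with distinct dilation exponents $s\neq t$. The heart of the matter is to build, using only left invariance and one-homogeneity of $d$, a self-similar family of points $x_1,\dots,x_k$ with radii $r_i=\|x_i\|$ forming a Besicovitch family of arbitrarily large cardinality: the shear produced by $[X,Y]\neq 0$, combined with the mismatched scaling $s\neq t$, lets one iterate a single basic configuration under dilations so that the normalized centers $\hat x_i$ accumulate on $S$ while remaining pairwise admissible, contradicting any uniform bound $K$ and hence WBCP.

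I expect the converse to be the main obstacle. There the Besicovitch family must be produced for an \emph{arbitrary} homogeneous distance, about which one knows only left invariance and homogeneity; one cannot compute with an explicit norm, and the contradiction must be extracted purely from the interaction between the algebraic shear $[V_s,V_t]\neq\{0\}$ and the asymmetric dilation weights $s\neq t$. Choosing the right scales and translations so that the self-similar family is genuinely Besicovitch for \emph{every} such $d$ is the technical core. By contrast, the "if" direction reduces, after the product decomposition, to producing and analyzing a single explicit distance, where the compactness and separation argument of Lemma~\ref{lem:WBCP-Rn} gives a clear, if still laborious, blueprint.
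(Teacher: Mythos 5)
Your two-directional architecture (reduce each implication to model cases) matches the paper's strategy in outline, and several structural observations are sound: the dyadic-class decomposition of $\g$ into pairwise commuting graded ideals (provided the classes are taken as equivalence classes $\{2^k t_0 : k\in\Z\}$ so that they are disjoint), the use of the max distance and Theorem~\ref{thm:WBCP-product-spaces}, and the fact that Besicovitch families in a dilation-invariant subgroup are Besicovitch families in $G$. The gaps are exactly at the points where you invoke a ``blueprint''. For the ``if'' direction, the product decomposition does not dissolve the difficulty: each dyadic factor can still have $[V_t,V_t]\subset V_{2t}$ nonzero, so it is a (snowflaked) step-$2$ group of Heisenberg type, and for such groups your proposed verification --- an explicit homogeneous norm plus the compactness/separation template of Lemma~\ref{lem:WBCP-Rn} --- fails as a matter of principle, not merely of labor. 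WBCP is not a consequence of homogeneity together with compactness of the unit sphere; it depends delicately on the shape of the unit ball. The stratified first Heisenberg group \emph{has} commuting different layers, yet the Kor\'anyi norm~\eqref{e:def-koranyi-dist}, which is as explicit and adapted to the grading as one could ask, does not satisfy WBCP (\cite{KoranyiReimann},~\cite{SawyerWheeden}): for it the admissibility relations do \emph{not} force separation, and normalized centers of Besicovitch families can accumulate on the sphere. The positive result requires the special Hebisch--Sikora distances, whose unit ball is a small Euclidean ball (Theorem~\ref{thm:bcp-heis}, extended to stratified free nilpotent groups of step $2$ in \cite[Theorem~4.5]{LeDonneRigot3}), and its proof is the main theorem of the entire paper~\cite{LeDonneRigot2}, not a soft angular-separation argument. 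This model-case theorem, combined with ad hoc submetries, is how the paper obtains the ``if'' part; in your write-up it is precisely the missing content.

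For the ``only if'' direction there are two gaps. First, the homogeneous subgroup generated by $X\in V_s$ and $Y\in V_t$ need not be a manageable low-dimensional model: since the layers do not commute, nothing prevents $[X,[X,Y]]\in[V_s,V_{s+t}]$ from being nonzero (filiform examples), so the generated graded subalgebra can have large dimension and uncontrolled structure; restriction to subgroups alone does not produce the non-standard Heisenberg group. The paper's reduction uses quotients as well: one exhibits a graded sub-quotient of $\g$ isomorphic to a non-standard first Heisenberg algebra of exponent $\alpha=t/s$ (Example~\ref{ex:non-standard-heis}), and failure of WBCP is transported back through the induced submetry, because a submetry maps balls onto balls and therefore lifts Besicovitch families from the quotient to the group. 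Second, even on that model, the statement that \emph{every} (continuous, quasi-)homogeneous distance admits Besicovitch families of arbitrarily large cardinality is \cite[Theorem~5.6]{LeDonneRigot3}; your self-similar iteration sketch names the right mechanism (the shear $[X,Y]\neq 0$ played against the mismatched exponents $s\neq t$) but gives no construction, as you yourself acknowledge. So as written, both implications rest on the two model-case theorems of \cite{LeDonneRigot3} that the proposal does not prove, and in the ``if'' direction the suggested route to the model case is one that a known counterexample rules out.
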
 

Note that, given a homogeneous group with commuting different layers, a layer of its associated positive grading may not commute with itself, and there are indeed examples of homogeneous groups with commuting different layers beyond the Abelian case, see below for few of them.

\medskip

A large class of homogeneous groups is given by stratified groups, also known as Carnot groups in the literature (although the latter terminology might sometimes more specifically be used when such groups are equipped with sub-Riemannian or sub-Finsler distances). We recall that a stratification of step $s$ of a finite dimensional real Lie algebra $\g$ is a positive grading of the form 
$$\g = V_1 \oplus V_2 \oplus \cdots \oplus V_s \;\;  \text{ where }[V_1,V_j] = V_{j+1}\; \text{ for all } 1\leq j\leq s $$ for some integer $s\geq 1$ and where $V_s \not= \{0\}$ and $V_{s+1} = \{0\}$. Equivalently, a stratification is a positive grading whose degree one layer generates $\g$ as a Lie algebra. We say that a Lie algebra is stratified of step $s$ if it is graded with associated positive grading that is a stratification of step $s$. We say that a connected and simply connected Lie group is stratified of step $s$ if its Lie algebra is stratified of step $s$. It can easily be seen that a stratified group has commuting different layers if and only if it is of step $1$ or $2$. Hence Theorem~\ref{thm:bcp-homogeneous-groups} has the following corollary.

\begin{corollary} \cite[Corollary~1.4]{LeDonneRigot3} \label{thm:bcp-stratified-groups}
Let $G$ be a stratified group. There exist homogeneous distances on $G$ for which WBCP holds if and only if $G$ is of step 1 or 2.
\end{corollary}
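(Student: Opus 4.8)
The plan is to deduce the corollary directly from Theorem~\ref{thm:bcp-homogeneous-groups}, whose hypotheses apply because a stratified group is in particular a homogeneous group: a stratification $\g = V_1 \oplus \cdots \oplus V_s$ is a positive grading in which the only nonzero layers sit in the integer degrees $1,\dots,s$, all of which are $\geq 1$, so no degree $t<1$ layer is present and $G$ is homogeneous. By Theorem~\ref{thm:bcp-homogeneous-groups}, the existence of a homogeneous distance on $G$ for which WBCP holds is equivalent to $G$ having commuting different layers. Thus the entire statement reduces to the purely algebraic claim that a stratified Lie algebra has commuting different layers if and only if its step is $1$ or $2$.

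First I would record the elementary fact that in a stratification of step $s$ every layer $V_j$ with $1 \leq j \leq s$ is nonzero. Indeed $V_s \neq \{0\}$ by definition, and if $V_j = \{0\}$ for some $j<s$, then the defining relation $[V_1,V_j] = V_{j+1}$ would force $V_{j+1} = \{0\}$ and, inductively, $V_s = \{0\}$, a contradiction. This observation is the only genuine input, and it is the point one must not overlook.

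For the direction asserting that step $1$ or $2$ implies commuting different layers, I would argue by cases. If $s=1$ there is a single layer and the condition is vacuous. If $s=2$, the only bracket between distinct layers is $[V_1,V_2]$, which equals $V_3 = V_{s+1} = \{0\}$; hence different layers commute. For the converse I would argue contrapositively: if $s \geq 3$, then by the nonvanishing of layers established above we have $[V_1,V_2] = V_3 \neq \{0\}$, so the distinct layers $V_1$ and $V_2$ (of degrees $1$ and $2$) fail to commute and $G$ does not have commuting different layers.

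Combining the two directions with Theorem~\ref{thm:bcp-homogeneous-groups} then yields the corollary. I do not expect a real obstacle here: the proof is essentially a translation of the two stated theorems into the stratified setting, and the only care needed is the layer nonvanishing lemma, after which everything is a short bracket computation.
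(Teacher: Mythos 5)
Your proposal is correct and follows exactly the paper's route: the paper also derives the corollary from Theorem~\ref{thm:bcp-homogeneous-groups} via the observation (stated there as "easily seen") that a stratified group has commuting different layers if and only if its step is $1$ or $2$. Your write-up simply supplies the details of that easy algebraic fact, including the nonvanishing of the intermediate layers, which is the right justification.
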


Let us now give some examples together with an outline of the main ideas involved in the proof of Theorem~\ref{thm:bcp-homogeneous-groups}.

\begin{example} \label{ex:step-1} Stratified groups of step $1$ equipped with homogeneous distances can be identified with finite dimensional normed vector spaces. As already explained, in such a case the validity of WBCP was known for a long time,  see Remark~\ref{rk:directionally-limited-metrics}.
\end{example}

The simplest example of non Abelian positively graduable Lie algebra is given by the first Heisenberg Lie algebra.

\medskip

The first Heisenberg Lie algebra $\h$ is the 3-dimensional Lie algebra that admits a basis $(X,Y,Z)$ where the only non trivial bracket relation is $[X,Y] = Z$. With no loss of generality, let us fix such a basis  of $\h$.

\medskip

The first Heisenberg group $\HH$ is the connected and simply connected Lie group whose Lie algebra is $\h$. Using exponential coordinates of the first kind, we write $p \in \HH$ as $p=\exp(x X + y Y + z Z)$ and we identify $p$ with $(x,y,z)\in \R^3$. Using the Baker-Campbell-Hausdorff formula, the group law is given by 
\begin{equation*}\label{e:law-heisenberg}
(x,y,z)\cdot (x,y,z) 
= (x + x', y+y', z+z'+\frac{1}{2} (x y' - y x'))~.
\end{equation*}

\begin{example} [Stratified first Heisenberg group] \label{ex:stratified-heis}
The first Heisenberg Lie algebra $\h$ is stratifiable of step 2. Namely,
\begin{equation*} 
\h = V_1 \oplus V_2 \quad \text{where }\; V_1:=\Span\{X,Y\},\;  V_2:=\Span{Z},
\end{equation*}
is a stratification that we call the \textit{standard stratification}. Note that stratifications are unique up to Lie algebra automorphisms of graded Lie algebras. Hence there is no loss of generality here to work with the standard stratification of $\h$ rather than with other possible stratifications.

\medskip

We call \textit{stratified first Heisenberg group} the first Heisenberg group viewed as a stratified group whose Lie algebra is equipped with the standard stratification. Associated dilations on the stratified first Heisenberg group are given by
\begin{equation*} 
\delta_r(x,y,z):= (r x,r y,r^2 z).
\end{equation*}

\medskip

There are several classical examples of homogeneous distances on the stratified first Heisenberg group, such as the Kor\'anyi (see~\eqref{e:def-koranyi-dist}) or sub-Riemannian distances. The following class of examples is due to Hebisch et Sikora. For $\gamma>0$, set $$A_\gamma := \{(x,y,z)\in \HH: x^2+y^2+z^2 \leq \gamma^2\}$$
and, for $p, q \in \HH$,
\begin{equation} \label{e:defdistdalpha}
d_\gamma(p,q): = \inf\{r>0: \delta_{1/r}(p^{-1}\cdot q) \in A_\gamma \}.
\end{equation}

\medskip

It is proved in~\cite{Hebisch_Sikora} that there is $\gamma^*\geq 2$, such that, for every $0<\gamma\leq \gamma^*$, $d_\gamma$ defines a homogeneous distance on the stratified first Heisenberg group. Note that the unit ball centered at the origin for such a distance is given by the set $A_\gamma$, that is, a Euclidean ball centered at the origin with a small enough radius. We refer to~\cite{Hebisch_Sikora} for a complete statement about existence of such homogeneous distances on arbitrary homogeneous groups.

\medskip

Going back to the validity of WBCP, the main result in~\cite{LeDonneRigot2} is the validity of WBCP for the homogeneous distances $d_\gamma$ on the stratified first Heisenberg group.

\begin{theorem} \cite[Theorem~1.14]{LeDonneRigot2} \label{thm:bcp-heis} Let $\gamma >0$ be such that $d_\gamma$ (see~\eqref{e:defdistdalpha}) is a homogeneous distance on the stratified first Heisenberg group. Then WBCP holds on $(\HH,d_\gamma)$.
\end{theorem}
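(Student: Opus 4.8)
The plan is to verify the weak Besicovitch covering property for $(\HH,d_\gamma)$ directly from Definition~\ref{def:WBCP}: I must exhibit a uniform bound $K$ on the cardinality of any Besicovitch family of balls. By left-invariance and homogeneity of $d_\gamma$, I can normalize any putative Besicovitch family so that the common point in $\bigcap_{B\in\bb}B$ is the origin $0$. Writing $\bb=\{B(p_i,r_i)\}_{i=1}^k$, the Besicovitch conditions become $d_\gamma(0,p_i)\leq r_i$ (since $0$ lies in each ball) together with $d_\gamma(p_i,p_j)>r_j$ for $i\neq j$ (since $p_i\notin B(p_j,r_j)$); in particular $d_\gamma(0,p_j)\leq r_j<d_\gamma(p_i,p_j)$. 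So the task reduces to bounding the number of points $p_1,\dots,p_k$ such that each is strictly closer to the origin than to any other point of the family, where distances are measured in $d_\gamma$.

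The first main step is to convert this into workable coordinates. Using the explicit description of the unit ball $A_\gamma$ as a Euclidean ball, I would translate the inequalities $d_\gamma(0,p_j)<d_\gamma(p_i,p_j)$ into comparisons involving the group operation: since $d_\gamma(p_i,p_j)=d_\gamma(0,p_i^{-1}\cdot p_j)$ by left-invariance, and the homogeneous norm $\|p\|:=d_\gamma(0,p)$ is governed by the shape of $A_\gamma$ via \eqref{e:defdistdalpha}, each constraint reads $\|p_j\|<\|p_i^{-1}\cdot p_j\|$. The key structural feature is that, unlike in the Euclidean case of Lemma~\ref{lem:WBCP-Rn}, the group product $p_i^{-1}\cdot p_j=(x_j-x_i,\;y_j-y_i,\;z_j-z_i-\tfrac12(x_iy_j-y_ix_j))$ contains the symplectic correction term in the $z$-coordinate, so a naive angular separation argument on the unit sphere will not suffice.

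I expect the heart of the argument, and the main obstacle, to be proving a quantitative separation estimate: I want to show that the directions $\delta_{1/\|p_i\|}(p_i)$ on the unit sphere $\{\|p\|=1\}$ cannot cluster, i.e.\ that the Besicovitch conditions force a definite $d_\gamma$-gap between the normalized points $\hat p_i:=\delta_{1/\|p_i\|}(p_i)$. The subtlety is that balls at very different scales $r_i$ interact through the nonabelian dilations $\delta_r(x,y,z)=(rx,ry,r^2z)$, so one cannot directly compactify by passing to the unit sphere as in the Euclidean proof. The strategy I would pursue is to split the family according to comparability of radii: for points whose radii are comparable (within a fixed dyadic ratio), a rescaling argument reduces matters to a compactness statement on the unit sphere of $d_\gamma$, where the strict inequalities yield a uniform angular bound and hence a bound on how many such points can coexist; for points at widely separated scales, I would use the specific geometry of $A_\gamma$ — crucially that $\gamma$ is small enough that $A_\gamma$ is a Euclidean ball, so the $z$-axis distortion is controlled — to show that the small-radius balls near $0$ and the large-radius balls are forced apart by the condition $p_i\notin B(p_j,r_j)$.

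Assembling these, the number of admissible points in each scale regime is bounded by a constant depending only on $n$ (here $n=3$) and $\gamma$, and summing over the finitely many relevant regimes — or, more cleanly, running a single covering-of-the-unit-sphere argument adapted to the homogeneous metric — produces the uniform constant $K$. The decisive difficulty, which is exactly what distinguishes Theorem~\ref{thm:bcp-heis} from the classical Euclidean Lemma~\ref{lem:WBCP-Rn} and from the non-examples such as the $l^p$-mean metrics of \cite{LeDonneRigot1}, is controlling the $z$-coordinate interaction across scales; this is where the precise choice of the Hebisch--Sikora ball $A_\gamma$, and in particular its being a genuine Euclidean ball of small radius rather than an arbitrary convex body, must be exploited. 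I refer to \cite{LeDonneRigot2} for the full quantitative geometric analysis carrying out this separation estimate.
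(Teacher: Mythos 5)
Your set-up is accurate as far as it goes: normalizing the common point to the origin by left-invariance, rewriting the Besicovitch conditions as $d_\gamma(0,p_j)\le r_j < d_\gamma(p_i,p_j)$, and observing that the symplectic correction in $p_i^{-1}\cdot p_j$ defeats the Euclidean angular argument of Lemma~\ref{lem:WBCP-Rn} are all correct (and note the paper itself gives no proof of Theorem~\ref{thm:bcp-heis}; it quotes~\cite{LeDonneRigot2}). But judged as a proof, your proposal has a genuine gap, located exactly where you say ``the heart of the argument'' lies: the cross-scale separation estimate is never proved, it is deferred back to~\cite{LeDonneRigot2}. That estimate is not a technical detail to be outsourced --- it \emph{is} the theorem. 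The Kor\'anyi unit ball is also a convex, rotationally symmetric body, yet WBCP fails for the Kor\'anyi distance (\cite{KoranyiReimann},~\cite{SawyerWheeden}); so any correct argument must extract a sharp quantitative property of the Euclidean ball $A_\gamma$ from~\eqref{e:defdistdalpha} (in~\cite{LeDonneRigot2} this takes the form of explicit inequalities governing how two Hebisch--Sikora balls at different scales can intersect), and no soft assertion that small and large balls are ``forced apart'' can substitute for it.

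Moreover, your assembly step does not close even granting that estimate. Splitting into dyadic radius regimes bounds each regime --- for comparable radii, after rescaling, the points are pairwise separated by a fixed amount inside a fixed ball, and metric doubling of $(\HH,d_\gamma)$ bounds their number; no compactness-plus-strict-inequality argument on the unit sphere is needed, nor would one work as stated, since strict inequalities over a noncompact family of configurations do not self-improve to uniform gaps. But nothing in your argument bounds the \emph{number of regimes} that can occur: a Besicovitch family may a priori contain balls at arbitrarily many scales, and the known counterexamples (the Kor\'anyi distance, the $l^p$-mean metrics of~\cite{LeDonneRigot1}) are built precisely from such multi-scale configurations, each scale contributing only a few balls. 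So ``summing over the finitely many relevant regimes'' is circular: showing that only boundedly many scales, with boundedly many balls each, can coexist in a Besicovitch family is the actual content of~\cite[Theorem~1.14]{LeDonneRigot2}, and your proposal does not supply it.
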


When $\gamma = 2$, $d_2$ defines a homogeneous distance on the stratified first Heisenberg group that is related to the Kor\'anyi distance $d_K$ via the following formula $$d_2(0,(x,y,z)) = (2\sqrt{2})^{-1} \, ((x^2 + y^2) + d_K(0,(x,y,z))^2)^{1/2},$$ 
where the Kor\'anyi distance from the origin is given by 
\begin{equation} \label{e:def-koranyi-dist}
d_K(0,(x,y,z)) := ((x^2 + y^2)^2 + 16 z^2)^{1/4}.
\end{equation}

\medskip

A proof that, for every $\epsilon>0$, the homogeneous distance $d_\epsilon$ given by $$d_\epsilon (0,(x,y,z)): =   (\epsilon(x^2 + y^2) + d_K(0,(x,y,z))^2)^{1/2}$$ satisfies WBCP on the stratified first Heisenberg group will be given in a forthcoming paper~\cite{Nicolussi-Rigot}. This will in particular give a sequence of homegeneous distances that are as close as one wants to the Kor\'anyi distance and that satisfy WBCP. However, it was noticed independently and at the same time in~\cite{KoranyiReimann} and~\cite{SawyerWheeden} that the Kor\'anyi distance itself does not satisfy WBCP.

\medskip

Theorem~\ref{thm:bcp-heis} has been extended to stratified free nilptotent groups of step 2 in~\cite[Theorem~4.5]{LeDonneRigot3}. The existence of homogeneous distances for which WBCP holds on finite dimensional normed vector spaces and on stratified free nilptotent groups of step 2 is the first crucial geometric step in the proof of the "if" part in Theorem~\ref{thm:bcp-homogeneous-groups}. This implication can indeed be deduced from these two model cases using the algebraic structure of graded Lie algebras with commuting different layers and ad hoc submetries, together with some of the general results stated at the beginning of this section. We refer to~\cite[Section~2]{LeDonneRigot1} for the definition of submetries and their relationship with the validity of WBCP, and to~\cite[Section~4]{LeDonneRigot3} for a complete proof of the "if" part in Theorem~\ref{thm:bcp-homogeneous-groups}.
\end{example}

\begin{example} [Non-standard first Heisenberg groups] \label{ex:non-standard-heis}
The first Heisenberg Lie algebra $\h$ admits positive gradings that are not stratifications. Namely, for $\alpha >1$, we call \textit{non standard grading of exponent $\alpha$} the positive grading of $\h$ given by
\begin{equation*} 
\h =W_1 \oplus W_\alpha \oplus W_{\alpha+1}
\end{equation*}
where $ W_1:=\Span\{X\}$, $W_\alpha:=\Span\{Y\}$, $W_{\alpha+1}=\Span\{Z\}$. Dilations associated to the non standard grading of exponent $\alpha$ are given by 
\begin{equation*} 
\delta_r(x,y,z) := (r x, r^\alpha y,r^{\alpha+1} z)~.
\end{equation*}

\medskip

We call \textit{non standard first Heisenberg group of exponent $\alpha$} the first Heisenberg group viewed as a graded group whose Lie algebra is endowed with the non standard grading of exponent $\alpha$. We have $[W_1,W_\alpha] = W_{\alpha+1}$ hence non standard Heisenberg groups do not have commuting different layers. 

\medskip

The non existence of continuous homogeneous quasi-distances for which \linebreak WBCP holds on non standard first Heisenberg groups, see~\cite[Theorem~5.6]{LeDonneRigot3}, is the first crucial geometric step in the proof of the "only if" part in Theorem~\ref{thm:bcp-homogeneous-groups}. The general case of homogeneous groups that do not have commuting different layers can indeed be obtained from this model case using an algebraic relationship between positive grading of Lie algebras that do not have commuting different layers and non standard gradings of the first Heisenberg Lie algebra, together with ad hoc submetries. We refer to~\cite[Section~5]{LeDonneRigot3} for a complete proof of the "only if" part in Theorem~\ref{thm:bcp-homogeneous-groups}. 
\end{example}

\end{document}